\documentclass[a4paper,12pt]{article}

\usepackage{amsmath}
\usepackage{amssymb}
\usepackage{amsthm}

\usepackage{color}
\newcommand{\comment}[1]{{\bf\color{red} #1}}

\newcommand{\R}{\mathbb{R}}
\newcommand{\C}{\mathbb{C}}
\newcommand{\N}{\mathbb{N}}
\newcommand{\RN}{{\mathbb{R}^N}}
\newcommand{\loc}{\mathrm{loc}}
\newcommand{\rad}{\mathrm{rad}}
\newcommand{\supp}{\mathop{\mathrm{supp}}}

\renewcommand{\liminf}{\varliminf}
\renewcommand{\limsup}{\varlimsup}

\makeatletter
\@addtoreset{equation}{section}%
\renewcommand{\theequation}{\thesection.\@arabic\c@equation}
\makeatother

\newtheorem{theorem}{Theorem}[section]
\newtheorem{lemma}[theorem]{Lemma}

\newtheorem{proposition}[theorem]{Proposition}

\newtheorem{claim}{Claim}

\newtheorem*{claim*}{Claim}

%enumerate環境の番号をローマ数字に変更

%footnoteのつけ方を変更

%\usepackage{refcheck}

\begin{document}
\title{A new rearrangement inequality and its application for 
$L^2$-constraint minimizing problems}
\author{Masataka Shibata}
\maketitle

\section{Introduction}
\label{sec:1}

In this paper, we show a new rearrangement inequality and give some applications to $L^2$-constraint minimizing problems.
In order to explain, we consider the following variational problem.
\begin{align*}
 E_\alpha &= \inf_{u \in M_\alpha} I(u), \\
 I(u) &= \frac{1}{2} \int_{\RN} |\nabla u|^2 dx - \frac{1}{p+1} \int_{\RN} |u|^{p+1} dx, \\
 M_\alpha & = \left\{u \in H^1(\RN); \|u\|_{L^2(\RN)}^2 = \alpha \right\},
\end{align*}
where $\alpha >0$ is a given constant and $N \geq 1$.
In this problem, it is well-known that $E_\alpha > -\infty$ if $1 < p < 1 + 4/N$, and
we can expect the existence of a global minimizer.

Here, we recall the Schwartz rearrangement.
For $u \in H^1(\RN)$, we denote by $u^*$ the Schwartz rearrangement of $u$.
It is well known that $u$ and $u^*$ are equimeasurable, 
$\|u\|_{L^{r}(\RN)} = \|u^*\|_{L^{r}(\RN)}$ for $r \geq 1$, and 
\begin{equation}
\label{eq:17}
 \int_{\RN} |\nabla u^*|^2 dx \leq
 \int_{\RN} |\nabla u|^2 dx.
\end{equation}

Thus $\{u_n^*\}_{n \in \N} \subset M_\alpha$ is a minimizing sequence for any minimizing sequence
$\{u_n\}_{n \in \N} \subset M_\alpha$.
Therefore we can use compactness of the embedding $H^1_{\rad}(\RN) \subset L^{2+4/N}(\RN)$ to obtain a minimizer $u \in M_\alpha$.

%また、$u \in M_\alpha$がminimizerであれば、$I(u^*) = I(u)= E_\alpha$となっているから、
%不等式\eqref{eq:17}は等号で成立していることがわかる。
%このような場合も調べられており、これこれといった$u$に関する情報も得られる\footnote{具体的に}。

In addition, precompactness of any given minimizing sequence is important.
Let $u$ be a global minimizer then $u$ is a solution of 
\begin{equation*}
 -\Delta u + \mu u = |u|^{p-1} u \text{ in } \RN,
\end{equation*}
where $\mu$ is a Lagrange multiplier.
Put $v(t,x)= e^{i \mu t} u(x)$ then $v$ is a standing wave of the following nonlinear Schr\"{o}dinger equation.
\begin{equation*}
 i v_t = \Delta v + |v|^{p-1} v.
\end{equation*}
In \cite{MR677997}, by using $H^1$-precompactness of any minimizing sequences, they showed 
orbital stability of the set of global minimizers.
For this purpose, the subadditivity condition
\begin{equation}
\label{eq:18}
E_{\alpha+\beta}  < E_\alpha + E_\beta
\end{equation}
plays an important rule.
The subadditivity condition exclude the dichotomy of minimizing sequences, and it implies $H^1$-precompactness.
In addition, the scaling arguments has been used to show the subadditivity condition.
In this paper, we give an another proof to obtain the subadditivity condition.
Let $u \in M_\alpha$ and $v \in M_\beta$ be a minimizer of $E_\alpha$ and  $E_\beta$.
We construct $w$ satisfying the following inequality.
\begin{equation}
\label{eq:19}
\|w\|_{L^r}^r = \|u\|_{L^r}^r + \|v\|_{L^r}^r, \quad
\int_{\RN} |\nabla w|^2 dx <
\int_{\RN} |\nabla u|^2 dx +
\int_{\RN} |\nabla v|^2 dx,
\end{equation}
where $r \geq 1$.
Therefore $w \in M_{\alpha+ \beta}$ and 
\begin{equation*}
E_{\alpha + \beta}  \leq I(w) < I(u) + I(v) = E_\alpha + E_\beta.
\end{equation*}
Hence \eqref{eq:18} holds.
Our main result is to construct such $w$ by using a new rearrangement.
Since it does not require scaling arguments, we can apply $L^2$-constraint minimizing problem related to nonlinear elliptic systems.

This paper is organized as follows.
In Section \ref{sec:2}, we introduce a new rearrangement and state our main theorem.
In Section \ref{sec:3}, we state application to the subadditivity condition.
In Section \ref{sec:4}, we state application to nonlinear elliptic systems.

\section{Rearrangement}
\label{sec:2}

In this section, we introduce a new rearrangement and show our main results.
For the purpose, we recall the Steiner rearrangement.

\subsection{The Steiner rearrangement}

In the following, we write $x=(x_1, x')$ with $x_1 \in \R$, $x' \in \R^{N-1}$
and we denote by $\mathcal{L}^i$ the $i$-dimensional Lebesgue measure.
Let $u$ be a function satisfies the following condition (A).
\begin{itemize}
 \item[(A)] $u : \R^N \to \R$: measurable, 
$\lim_{|x| \to \infty} u(x)=0$.
\end{itemize}
We denote by $u^\star$ the Steiner symmetric rearrangement of $u$. 
The Steiner symmetric rearrangement $u^\star$ is a function which satisfies the following properties:
\begin{itemize}
 \item $x_1 \mapsto u(x_1, x')$ is symmetric with respect to the origin and non-increasing with respect to $|x_1|$ for any $x' \in \R^{N-1}$. 
 \item $u^\star(\cdot, x')$ is equimeasurable with $u(\cdot, x')$ for any $x' \in \R^{N-1}$. 
That is, 
for any $t>0$, $x' \in \R^{N-1}$,
\begin{equation}
\label{eq:1}
\mathcal{L}^1\left(
\left\{x_1 \in \R; |u(x_1,x')|>t\right\}
\right)
=
\mathcal{L}^1\left(
\left\{x_1 \in \R; u^\star(x_1,x')>t\right\}
\right).
\end{equation}
\end{itemize}
More precisely,
the Steiner rearrangement $u^\star$ is defined by
\begin{equation*}
 u^\star(x_1, x')=
\int_0^\infty \chi_{\{|u(\cdot, x')|>t\}^\star}(x_1) dt,
\end{equation*}
where
$A^\star$ is the Steiner rearrangement of $A$ defined by
\begin{equation*}
A^\star = \left(
-\mathcal{L}^1(A)/2,
\mathcal{L}^1(A)/2
\right).
\end{equation*}
We remark that the Steiner rearrangement is defined under more general assumptions.
However, for simplicity, we assume the condition (A).
About the Steiner rearrangement, we summarize well-known facts as follows.
%(see, e.g., \footnote{何かを参照}).
\begin{proposition}
\label{prop:1}
Assume $u$ satisfies (A) and let $u^\star$ be the Steiner rearrangement of $u$.
Then
\begin{enumerate}
 \item $u^\star$ is measurable in $\RN$. Moreover, 
$|u|$ and $u^\star$ is equimeasurable in $\RN$, that is, 
\begin{equation*}
\mathcal{L}^N\left(
\left\{x \in \R^N; |u(x)|>t\right\}
\right)
=
\mathcal{L}^N\left(
\left\{x \in \R^N; u^\star(x)>t\right\}
\right).
\end{equation*}
 \item  Let $\Phi_1, \Phi_2: [0,\infty) \to \R$ be monotone functions.
For $\Phi=\Phi_1 + \Phi_2$,
\begin{equation*}
\int_{\RN} \Phi(u^\star) dx =
\int_{\RN} \Phi(|u|) dx 
\end{equation*}
holds if
\begin{equation*}
\left| \int_{\RN} \Phi_1(|u|) dx \right| < \infty \text{ or }
\left| \int_{\RN} \Phi_2(|u|) dx \right| < \infty.
\end{equation*}
In particular, 
\begin{equation*}
 \int_{\RN} |u^\star|^p dx
=
 \int_{\RN} |u|^p dx
\end{equation*}
for $1 \leq p < \infty$.
 \item Assume $1 \leq p < \infty$.
If $u \in W^{1,p}(\RN)$,
it holds that $u^\star \in W^{1,p}(\RN)$. Moreover, 
\begin{equation*}
\int_{\RN} |\partial_i u^\star|^p dx
\leq
\int_{\RN} |\partial_i u|^p dx \text{ for } i=1, \dots, N.
\end{equation*}
\end{enumerate}
\end{proposition}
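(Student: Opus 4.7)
The plan is to establish the three assertions in the order given, each one feeding into the next. The common thread is to reduce everything to the one-dimensional situation along each slice $\{(x_1,x') : x_1 \in \R\}$ (for which equimeasurability \eqref{eq:1} is the defining property), and then integrate in $x'$ via Fubini.

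For part (i), I would first verify measurability of $u^\star$ by rewriting its definition. Since $\{|u(\cdot,x')|>t\}^\star=(-L(x',t)/2,\,L(x',t)/2)$ with $L(x',t):=\mathcal{L}^1(\{x_1 : |u(x_1,x')|>t\})$, Fubini and the measurability of $u$ show that $(x',t)\mapsto L(x',t)$ is measurable, hence so is the set $\{(x_1,x',t) : |x_1|<L(x',t)/2\}$. Integrating the characteristic function in $t$ gives a measurable $u^\star$. For the $N$-dimensional equimeasurability, write
\begin{equation*}
\mathcal{L}^N(\{u^\star>t\})=\int_{\R^{N-1}}\mathcal{L}^1(\{x_1:u^\star(x_1,x')>t\})\,dx'
\end{equation*}
by Fubini, then apply the one-dimensional equimeasurability \eqref{eq:1} slicewise and Fubini again to recover $\mathcal{L}^N(\{|u|>t\})$.

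Part (ii) is a direct consequence of (i) via the layer-cake formula. For any monotone $\Phi_j$ one gets $\int \Phi_j(u^\star)\,dx=\int\Phi_j(|u|)\,dx$ whenever one side is finite, by expressing the integral as $\int_0^\infty\mathcal{L}^N(\{|u|>t\})\,d\Phi_j(t)$ (with an appropriate sign if $\Phi_j$ is decreasing) and using (i). Summing for $\Phi=\Phi_1+\Phi_2$ under the stated integrability assumption gives the claim; the $L^p$-preservation is the special case $\Phi(s)=s^p$.

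Part (iii) is where the real work lies, and it splits into two cases according to whether the derivative is taken in the symmetrization direction or transverse to it. For $i=1$, I would invoke the one-dimensional P\'olya--Szeg\H{o} inequality on each slice $x'$ (the standard 1D fact that the symmetric decreasing rearrangement does not increase $\|u'\|_{L^p}$), obtain $\int_\R|\partial_1 u^\star(\cdot,x')|^p\,dx_1\le \int_\R|\partial_1 u(\cdot,x')|^p\,dx_1$, and integrate in $x'$. The transverse case $i\ge 2$ is the main obstacle, since the Steiner rearrangement does not act slicewise in these directions. Here I would use the nonexpansiveness of the rearrangement in $L^p$: observing that translation in the $x_i$-direction for $i\ne 1$ commutes with the Steiner rearrangement in $x_1$, i.e.\ $(u(\cdot+he_i))^\star=u^\star(\cdot+he_i)$, combined with the general inequality $\|f^\star-g^\star\|_{L^p}\le \|f-g\|_{L^p}$ for Steiner rearrangement. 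Applying this to $f=u(\cdot+he_i)$ and $g=u$ yields
\begin{equation*}
\|u^\star(\cdot+he_i)-u^\star\|_{L^p}\le \|u(\cdot+he_i)-u\|_{L^p},
\end{equation*}
and dividing by $h$ and sending $h\to 0$ gives the required bound on $\|\partial_i u^\star\|_{L^p}$. The nonexpansiveness itself is established by approximation with simple functions built from superlevel sets, where it reduces to the elementary fact that replacing two measurable sets by their Steiner rearrangements does not increase the measure of their symmetric difference.
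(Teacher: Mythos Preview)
The paper does not actually prove Proposition~\ref{prop:1}: it is introduced with the phrase ``we summarize well-known facts as follows'' and no proof is given. So there is no argument in the paper to compare your proposal against; your sketch is essentially the standard textbook route (Fubini for (i), layer-cake for (ii), slicewise P\'olya--Szeg\H{o} plus $L^p$-nonexpansiveness for (iii)) and is correct in outline.

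One point that would need care if you flesh this out: in part~(iii) for the transverse directions $i\ge 2$, the difference-quotient argument
\begin{equation*}
\frac{\|u^\star(\cdot+he_i)-u^\star\|_{L^p}}{|h|}\le \frac{\|u(\cdot+he_i)-u\|_{L^p}}{|h|}
\end{equation*}
only yields $\partial_i u^\star\in L^p$ with the desired bound when $p>1$, since for $p=1$ uniformly bounded difference quotients give merely $BV$, not $W^{1,1}$. The statement includes $p=1$, so this endpoint requires an additional argument (e.g.\ approximation by smooth functions together with lower semicontinuity, or a direct coarea-type computation). This is a well-known wrinkle rather than a fatal flaw, but as written your sketch does not cover it.
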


\subsection{Coupled rearrangement}

Now we introduce a new rearrangement which we call \textit{coupled rearrangement}.
Suppose $u$ and $v$ satisfy the condition (A).
The coupled rearrangement $u \star v$ of $u$ and $v$ is defined as follows.
For any $x' \in \R^{N-1}$, 
$x_1 \mapsto (u \star v)(x_1, x')$ is symmetric with respect to the origin and monotone with respect to $|x_1|$.
For any $t>0$, $x' \in \R^{N-1}$,
\begin{align}
\notag
&\mathcal{L}^1\left(
\left\{x_1 \in \R; |u(x_1,x')|>t\right\}
\right)
+
\mathcal{L}^1\left(
\left\{x_1 \in \R; |v(x_1,x')|>t\right\}
\right) \\
&=
\mathcal{L}^1\left(
\left\{x_1 \in \R; (u \star v)(x_1,x')>t\right\}
\right).
\end{align}
More precisely,
$u \star v$ is defined by
\begin{equation*}
(u \star v)(x_1, x')=
\int_0^\infty \chi_{\{|u(\cdot, x')|>t\} \star \{|v(\cdot, x')|>t\}}(x_1) dt,
\end{equation*}
where 
\begin{equation*}
A \star B
= \left(
-(\mathcal{L}^1(A) + \mathcal{L}^1(B))/2,
(\mathcal{L}^1(A) + \mathcal{L}^1(B))/2
\right).
\end{equation*}
About the coupled rearrangement, we can show similar properties as follows.
We give the proofs in the next subsection.
\begin{lemma}
\label{lem:3}
Assume $u$ and $v$ satisfy the condition (A) and let $u \star v$ be the coupled rearrangement of $u$ and $v$.
Then, 
\begin{enumerate}
 \item 
$u \star v$ is measurable in $\R^N$. 
Moreover, 
\begin{align*}
&\mathcal{L}^N\left(
\left\{x \in \R^N; |u(x)|>t\right\}
\right)
+
\mathcal{L}^N\left(
\left\{x \in \R^N; |v(x)|>t\right\}
\right) \\
&=
\mathcal{L}^N\left(
\left\{x \in \R^N; (u \star v)(x)>t\right\}
\right).
\end{align*}
 \item 
Let $\Phi_1, \Phi_2: [0,\infty) \to \R$ be monotone functions.
For $\Phi=\Phi_1 + \Phi_2$,
\begin{equation*}
\int_{\RN} \Phi(u \star v) dx
=
\int_{\RN} \Phi(|u|) dx
+
\int_{\RN} \Phi(|v|) dx,
\end{equation*}
where
\begin{equation*}
\left| \int_{\RN} \Phi_1(|u|) dx \right|,
\left| \int_{\RN} \Phi_1(|v|) dx \right| < \infty
\text{ or }
\left| \int_{\RN} \Phi_2(|u|) dx \right|,
\left| \int_{\RN} \Phi_2(|v|) dx \right| < \infty
\end{equation*}
holds.
In particular, 
\begin{equation*}
\int_{\RN} |u \star v|^p dx
=
\int_{\RN} |u|^p dx
+
\int_{\RN} |v|^p dx
\end{equation*}
holds for any $p \geq 1$.
\end{enumerate}
\end{lemma}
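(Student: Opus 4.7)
The plan is to deduce both assertions from Fubini's theorem applied slice-by-slice in the $x_1$ direction, combined with the layer-cake formula. Set
\[
m(t, x') := \mathcal{L}^1\bigl(\{x_1 \in \R : |u(x_1, x')|>t\}\bigr) + \mathcal{L}^1\bigl(\{x_1 \in \R : |v(x_1, x')|>t\}\bigr),
\]
so that by the very definition of $u \star v$ the one-dimensional superlevel set is
\[
\{x_1 \in \R : (u \star v)(x_1, x') > t\} = \bigl(-m(t,x')/2,\ m(t,x')/2\bigr).
\]
Thus the defining slicewise equimeasurability is built in, and everything will reduce to controlling $m$.

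For assertion (i), I would first check that $(t, x') \mapsto m(t, x')$ is jointly measurable on $(0,\infty) \times \R^{N-1}$. Measurability of $|u|$ and $|v|$ on $\RN$ gives, via Fubini, measurability of each summand in $x'$ for fixed $t$; monotonicity in $t$ then upgrades this to joint measurability (e.g.\ by approximation with a countable family indexed by rationals). Consequently the set $\{(x_1, x', t) : 2|x_1| < m(t,x')\}$ is measurable in $\RN \times (0,\infty)$, and Fubini applied to the defining integral representation of $u \star v$ shows that $u \star v$ is measurable on $\RN$. The measure identity then follows at once: by Fubini,
\[
\mathcal{L}^N\bigl(\{u \star v > t\}\bigr) = \int_{\R^{N-1}} \mathcal{L}^1\bigl(\{x_1 : (u \star v)(x_1, x') > t\}\bigr)\,dx' = \int_{\R^{N-1}} m(t,x')\,dx',
\]
and splitting $m$ into its two summands and applying Fubini once more to each of $|u|$ and $|v|$ yields the claimed equality of $N$-dimensional measures.

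For assertion (ii), I would invoke the layer-cake principle: for a non-decreasing, absolutely continuous $\Phi$ with $\Phi(0)=0$ one has $\int_{\RN} \Phi(|f|)\,dx = \int_0^\infty \Phi'(t)\,\mathcal{L}^N(\{|f|>t\})\,dt$, with the analogous Stieltjes version valid for any monotone $\Phi$. Applying this to $|u|$, $|v|$ and $u \star v$ and substituting the measure identity from (i) inside the integral gives
\[
\int_{\RN} \Phi(u \star v)\,dx = \int_{\RN} \Phi(|u|)\,dx + \int_{\RN} \Phi(|v|)\,dx
\]
first for each monotone summand $\Phi_i$ separately, and then for $\Phi = \Phi_1 + \Phi_2$ by linearity. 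The hypothesis that one of $\Phi_i$ produces finite integrals for both $u$ and $v$ is exactly what is needed to rule out $\infty - \infty$ when the two monotone pieces are recombined.

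I expect the only real subtlety to be the measurability argument: one must make sure that $m(t, x')$ is jointly measurable in $(t, x')$ so that the integral representation of $u \star v$ yields a measurable function on $\RN$. Once this is in place the rest is a straightforward application of Fubini and layer-cake, and the proof mirrors the standard proof of the corresponding properties of $u^\star$ recorded in Proposition \ref{prop:1}, with the sum of two one-dimensional distribution functions replacing a single one.
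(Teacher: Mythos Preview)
Your argument is correct: joint measurability of $m(t,x')$ follows from Fubini in $x'$ plus monotonicity in $t$, the layer-cake representation then gives measurability of $u\star v$, and the $\mathcal L^N$-identity and part (ii) drop out of Fubini and the layer-cake formula exactly as you say.

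The paper, however, obtains measurability by a different route. It does not analyse $m(t,x')$ directly; instead it uses Lemma~\ref{lem:1}: for each $s>0$ one translates $v$ in the $x_1$-direction so that $\supp(|u|-s)_+$ and $\supp(|v_s|-s)_+$ are disjoint, whence $(u\star v - s)_+ = (|u|-s)_+\star(|v_s|-s)_+ = \{(|u|-s)_+ + (|v_s|-s)_+\}^\star$. Measurability of the Steiner rearrangement (Proposition~\ref{prop:1}) then gives measurability of $(u\star v - s)_+$ for every $s>0$, hence of $u\star v$. This reduction to the Steiner rearrangement is the paper's key device, and it is reused immediately to prove the gradient inequality of Lemma~\ref{lem:2}; the paper does not spell out the remaining parts of Lemma~\ref{lem:3} beyond this. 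Your direct Fubini/layer-cake argument is more self-contained and arguably cleaner for Lemma~\ref{lem:3} in isolation, while the paper's truncate-translate-and-reduce-to-$u^\star$ approach pays off because the same identity is what drives the $W^{1,p}$ estimate.
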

%また、微分についても、次の不等式が成立する。
\begin{lemma}
\label{lem:2}
Assume $1 \leq p < \infty$.
$u$ and $v$ satisfy the condition (A) and
$u,v \in W^{1,p}(\RN)$.
Then it holds that
\begin{equation*}
\int_{\RN} |\partial_i (u \star v)|^p dx
\leq
\int_{\RN} |\partial_i u|^p dx
+
\int_{\RN} |\partial_i v|^p dx \text{ for } i=1,\dots,N.
\end{equation*}
\end{lemma}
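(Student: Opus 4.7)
My plan is to reduce Lemma~\ref{lem:2} to Proposition~\ref{prop:1}(iii) by realizing the coupled rearrangement $u\star v$ as the Steiner rearrangement of a single function built by translating $u$ and $v$ far apart along the $x_1$-axis. The Steiner estimate then transfers verbatim, and a density argument handles the general case.

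I would first treat smooth, compactly supported $u, v \in C_c^\infty(\RN)$ with supports contained in $B_\rho$. For $L > \rho$, define
\[
 w(x) := u(x + L e_1) + v(x - L e_1);
\]
the two summands are supported in the disjoint slabs $\{x_1 \in [-L-\rho, -L+\rho]\}$ and $\{x_1 \in [L-\rho, L+\rho]\}$. Disjointness of supports gives immediately
\[
 \int_\RN |\partial_i w|^p\, dx = \int_\RN |\partial_i u|^p\, dx + \int_\RN |\partial_i v|^p\, dx,
\]
and it also gives, on every slice $x' \in \R^{N-1}$ and for every $t>0$,
\[
 \mathcal{L}^1(\{|w(\cdot,x')|>t\}) = \mathcal{L}^1(\{|u(\cdot,x')|>t\}) + \mathcal{L}^1(\{|v(\cdot,x')|>t\}).
\]
Hence $w^\star$ and $u \star v$ are both symmetric and non-increasing in $|x_1|$ on each slice and share the same 1D distribution function, so they coincide. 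Applying Proposition~\ref{prop:1}(iii) to $w$ now yields Lemma~\ref{lem:2} in this case.

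For general $u,v \in W^{1,p}(\RN)$ satisfying (A), I would approximate by $u_n, v_n \in C_c^\infty(\RN)$ converging to $u,v$ in $W^{1,p}(\RN)$, passing to a subsequence for a.e.\ convergence. By Fubini (and a further subsequence), $u_n(\cdot,x') \to u(\cdot,x')$ in $L^p(\R)$ for a.e.\ $x'$, which implies a.e.\ convergence of the 1D distribution functions at their continuity points, and similarly for $v_n$. Plugging into the defining integral of the coupled rearrangement gives $u_n \star v_n \to u \star v$ a.e.\ in $\RN$. The smooth case, together with Lemma~\ref{lem:3}, provides a uniform $W^{1,p}$ bound on $u_n \star v_n$; extracting a weakly convergent subsequence, identifying the weak limit with the a.e.\ limit $u \star v$, and invoking weak lower semicontinuity of $\|\partial_i \cdot\|_{L^p}$ transfers the inequality to the limit.

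The main obstacle is the approximation step: establishing the a.e.\ convergence $u_n \star v_n \to u \star v$ and then identifying the weak $W^{1,p}$ limit. For $p>1$ this is routine via weak compactness combined with Rellich--Kondrachov, but for $p=1$ a weakly convergent subsequence in $W^{1,1}$ need not exist; there I would instead use Scheff\'e's lemma together with the $L^1$ identity of Lemma~\ref{lem:3} to upgrade a.e.\ convergence to strong $L^1$ convergence, and then appeal to the lower semicontinuity of the total variation to close the argument.
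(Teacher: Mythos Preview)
Your core reduction---translate along the $x_1$-axis to get disjoint supports, identify the coupled rearrangement with a Steiner rearrangement of the sum, and invoke Proposition~\ref{prop:1}(iii)---is exactly the paper's idea. The difference lies in how you reach the general case. The paper truncates at level $s>0$: condition~(A) forces $(|u|-s)_+$ and a translate of $(|v|-s)_+$ to have disjoint compact supports, Lemma~\ref{lem:1}(i),(iii) identifies $(u\star v-s)_+$ with the Steiner rearrangement of their sum, and one lets $s\to 0$ by monotone convergence. You instead approximate by $C_c^\infty$ and pass to weak limits.

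For $p>1$ your route works: the a.e.\ convergence $u_n\star v_n\to u\star v$ can be justified along the lines you sketch, and weak $W^{1,p}$ compactness together with lower semicontinuity of $\|\partial_i\cdot\|_{L^p}$ closes the argument. But your $p=1$ patch has a genuine gap. Scheff\'e plus lower semicontinuity of the total variation only yields $u\star v\in BV(\RN)$ with a bound on $|D(u\star v)|(\RN)$; nothing in your argument rules out a singular part of $D(u\star v)$, so you have not shown $u\star v\in W^{1,1}(\RN)$, which the lemma implicitly asserts. The paper's truncation sidesteps this entirely: for every $s>0$ the function $(u\star v-s)_+$ is exhibited directly as the Steiner rearrangement of a $W^{1,p}$ function, hence is in $W^{1,p}$ by Proposition~\ref{prop:1}(iii); the gradients of $(u\star v-s)_+$ are nested consistently as $s$ decreases, and one passes to the limit in the distributional identity by monotone convergence, without ever needing weak compactness in $W^{1,1}$.
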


Our main theorem is the following strict inequality.
\begin{theorem}
\label{thm:1}
For $u, v \in W^{1,p}(\RN) \cap C^1(\RN)$ satisfying that $u,v >0$, 
$\lim_{|x| \to \infty} u(x) = \lim_{|x| \to \infty} v(x) = 0$, 
and $u(x_1, x')$ and $v(x_1, x')$ are monotone decreasing with respect to $|x_1|$.
Then, the following strict inequality holds.
\begin{equation*}
\int_{\RN} |\nabla (u \star v)|^p dx
<
\int_{\RN} |\nabla u|^p dx
+
\int_{\RN} |\nabla v|^p dx.
\end{equation*}

\end{theorem}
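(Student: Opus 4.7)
My plan is to rewrite each of the three $L^p$-gradient integrals via a coarea representation in the $x_1$-variable, and then apply a strict pointwise Jensen-type inequality on the region where the level sets of $u$ and $v$ overlap. The underlying non-strict estimate is essentially what yields Lemma~\ref{lem:2}; strictness will come from the strict convexity of the auxiliary function.

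\textbf{Step 1 (coarea representation).} For each $x' \in \R^{N-1}$, set $f_0(x') := u(0, x')$ and $a(t, x') := \sup\{x_1 \geq 0 : u(x_1, x') > t\}$, so that $\{u > t\} = \{|x_1| < a(t, x')\}$ and $a(\cdot, x')$ is $C^1$ and positive on $(0, f_0(x'))$ by the hypotheses. Define $b(t, x')$, $g_0(x')$ analogously for $v$. By the definition of the coupled rearrangement, $\{u \star v > t\} = \{|x_1| < c(t, x')\}$ where $c := a + b$. Parametrizing the upper sheet $\{u = t\}$ by $x_1 = a(t, x')$, one has $\partial_1 u = 1/a_t$ and $\nabla_{x'} u|_{\{u = t\}} = -(\nabla_{x'} a)/a_t$, so the coarea formula yields
\begin{equation*}
\int_{\RN} |\nabla u|^p\, dx = 2\int_0^\infty\!\!\int_{\{f_0 > t\}} \frac{(1 + |\nabla_{x'} a|^2)^{p/2}}{|a_t|^{p-1}}\, dx'\, dt,
\end{equation*}
with analogous formulas for $v$ (with $b$) and for $u \star v$ (with $c$; using that $a_t, b_t \leq 0$, so $|c_t| = |a_t| + |b_t|$).

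\textbf{Step 2 (pointwise inequality).} Next I would establish that for all $\alpha, \beta > 0$, $A, B \in \R^{N-1}$, and $p \geq 1$,
\begin{equation*}
\frac{(1 + |A + B|^2)^{p/2}}{(\alpha + \beta)^{p-1}} < \frac{(1 + |A|^2)^{p/2}}{\alpha^{p-1}} + \frac{(1 + |B|^2)^{p/2}}{\beta^{p-1}}.
\end{equation*}
Let $s := \alpha/(\alpha + \beta) \in (0, 1)$ and $\tau := 1 - s = \beta/(\alpha + \beta)$. Since the function $\Phi(y) := (1 + |y|^2)^{p/2}$ is convex on $\R^{N-1}$ for $p \geq 1$ (as the $p$-th power of the convex nonnegative function $\sqrt{1 + |y|^2}$), Jensen applied to $A + B = s(A/s) + \tau(B/\tau)$ gives
\begin{equation*}
\Phi(A + B) \leq s\Phi(A/s) + \tau\Phi(B/\tau) = \frac{(s^2 + |A|^2)^{p/2}}{s^{p-1}} + \frac{(\tau^2 + |B|^2)^{p/2}}{\tau^{p-1}}.
\end{equation*}
Since $s^2, \tau^2 < 1$ strictly, the right-hand side is strictly less than $(1 + |A|^2)^{p/2}/s^{p-1} + (1 + |B|^2)^{p/2}/\tau^{p-1}$; dividing by $(\alpha + \beta)^{p-1}$ and using $s(\alpha + \beta) = \alpha$, $\tau(\alpha + \beta) = \beta$ gives the claim.

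\textbf{Step 3 (integration) and main obstacle.} Plugging Step~2 into the coarea representation with $\alpha = |a_t|$, $\beta = |b_t|$, $A = \nabla_{x'} a$, $B = \nabla_{x'} b$, and integrating over the overlap region $C := \{(t, x') : 0 < t < \min(f_0(x'), g_0(x'))\}$, the difference $\int |\nabla u|^p + \int |\nabla v|^p - \int |\nabla (u \star v)|^p$ equals twice the integral of a strictly positive quantity over $C$. Since $u, v > 0$ are continuous, $f_0, g_0 > 0$ everywhere, so $C$ has positive Lebesgue measure and the strict inequality follows. The main obstacle is to make Step~1 rigorous where $\partial_1 u$ or $\partial_1 v$ vanishes on a set with positive $(t, x')$-measure, since there $a_t, b_t$ are not classical; I would resolve this either by restricting to the regular set (full-measure in $(t, x')$ by the $W^{1, p}$ coarea formula) or by mollifying $u, v$ and passing to the limit, exploiting that the critical set contributes nothing to any of the three integrals.
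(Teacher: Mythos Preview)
Your argument is sound (up to the regularity caveat you already flag) and takes a genuinely different route from the paper's. The paper proceeds one-dimensionally: it proves the strict inequality for $\partial_1$ alone via Lemma~\ref{lem:7}, which in turn rests on Duff's quantitative decreasing-rearrangement inequality (Theorem~\ref{thm:2}); strictness there comes from a multiplicity count, namely that once one translates $(g-s)_+$ away from $(f-s)_+$ the combined function has $N_h(\lambda)\ge 4$ on the overlap range of heights. It then adds on the non-strict componentwise bounds of Lemma~\ref{lem:2} for $\partial_2,\dots,\partial_N$ and sums. You instead parametrize the full level surfaces by the graphs $x_1=a(t,x')$, $x_1=b(t,x')$, $x_1=c(t,x')=a+b$, and extract strictness from the pointwise convexity inequality of Step~2, whose sharp step is simply $s^2,\tau^2<1$. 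Two payoffs of your approach: it is self-contained (no external appeal to Duff's theorem), and it controls the Euclidean norm $|\nabla\,\cdot\,|^p$ directly rather than $\sum_i|\partial_i\,\cdot\,|^p$, so it yields the stated full-gradient inequality for every $p\ge 1$ without an additional argument; the paper's summation of partial-derivative estimates only coincides with the full-gradient claim when $p=2$. The price is exactly the obstacle you name: the change of variables $(x_1,x')\leftrightarrow(t,x')$ and the identities $\partial_1 u=1/a_t$, $\nabla_{x'}u=-\nabla_{x'}a/a_t$ require $\partial_1 u<0$ on the relevant sheet, which ``monotone decreasing'' alone does not guarantee. Your proposed remedies are both viable; the cleanest is probably to perturb $u,v$ by a small strictly Steiner-decreasing bump (e.g.\ $u+\varepsilon\, e^{-|x|^2}$) so that $\partial_1 u<0$ for $x_1>0$, prove the strict inequality with a uniform positive margin coming from Step~2 on a fixed-measure subset of $C$, and pass to the limit $\varepsilon\to 0$.
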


\subsection{Proof of Lemma \ref{lem:3} and \ref{lem:2}}

In this subsection, we give the proofs of Lemma \ref{lem:3} and \ref{lem:2}.
To our purpose, we prepare the following lemma.
\begin{lemma}
\label{lem:1}
Assume that $u$ and $v$ satisfy the condition (A).
Then the following properties holds.
\begin{enumerate}
 \item The Steiner rearrangement and the coupled rearrangement are
       invariant to translation of $x_1$ direction. That is, 
       for $s,t \in \R$, $\tilde{u}^\star=u^\star$ and $\tilde{u} \star
       \tilde{v} = u \star v$ hold, where 
       $\tilde{u}(x_1,x') = u(x_1+s, x')$, 
       $\tilde{v}(x_1,x') = v(x_1+t, x')$.
 \item If $\supp u \cap \supp v = \emptyset$, it holds that $u \star v =(u +
 v)^\star$.
 \item For $s>0$, it holds that
$(|u|-s)_+ \star (|v|-s)_+ = (u \star v -s)_+$.
\end{enumerate}
\end{lemma}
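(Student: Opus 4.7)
The plan is to reduce each of the three identities to a short computation with the one-dimensional Lebesgue measure of slice-wise level sets. Both $u^\star(\cdot, x')$ and $(u \star v)(\cdot, x')$ are, by construction, symmetric and non-increasing in $|x_1|$, and the layer-cake integrals defining them take as sole input the $\mathcal{L}^1$-measures of $\{|u(\cdot,x')|>t\}$ and $\{|v(\cdot,x')|>t\}$. Consequently, any two symmetric non-increasing functions on a fixed slice whose one-dimensional distributions match must coincide pointwise, and this is the common mechanism behind all three parts.

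Part (i) is immediate from this principle. If $\tilde u(x_1,x') = u(x_1+s,x')$, then $\{|\tilde u(\cdot,x')|>t\}$ is the translate of $\{|u(\cdot,x')|>t\}$ by $-s$, hence has the same $\mathcal{L}^1$-measure; since $A^\star$ and $A \star B$ depend on $A, B \subset \R$ only through $\mathcal{L}^1(A)$ and $\mathcal{L}^1(B)$, the layer-cake formula yields $(\tilde u)^\star = u^\star$ and $\tilde u \star \tilde v = u \star v$. For part (ii), the hypothesis $\supp u \cap \supp v = \emptyset$ gives $|(u+v)(x)| = |u(x)| + |v(x)|$ for every $x$, so on each slice the sets $\{|u(\cdot,x')|>t\}$ and $\{|v(\cdot,x')|>t\}$ are essentially disjoint with union $\{|(u+v)(\cdot,x')|>t\}$; hence
\[
\mathcal{L}^1(\{|u(\cdot,x')|>t\}) + \mathcal{L}^1(\{|v(\cdot,x')|>t\}) = \mathcal{L}^1(\{|(u+v)(\cdot,x')|>t\}),
\]
and the defining symmetric intervals $\{|u(\cdot,x')|>t\} \star \{|v(\cdot,x')|>t\}$ and $\{|(u+v)(\cdot,x')|>t\}^\star$ coincide. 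Integrating in $t$ yields $u \star v = (u+v)^\star$.

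For part (iii), I would start from the elementary identity $\{(|w|-s)_+>t\} = \{|w|>s+t\}$ valid for $s>0$, $t>0$ and any measurable $w$. Applied slice-wise to $u$ and $v$ and combined with the defining property of $u \star v$ at height $s+t$,
\[
\mathcal{L}^1(\{(|u|-s)_+(\cdot,x')>t\}) + \mathcal{L}^1(\{(|v|-s)_+(\cdot,x')>t\}) = \mathcal{L}^1(\{(u \star v)(\cdot,x')>s+t\}).
\]
The left side is exactly the slice-wise distribution function of $(|u|-s)_+ \star (|v|-s)_+$ at height $t$, while the right side is the slice-wise distribution function of $(u \star v - s)_+$ at the same height. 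Both functions are symmetric and non-increasing in $|x_1|$, so the reduction from the first paragraph forces pointwise equality and hence $(|u|-s)_+ \star (|v|-s)_+ = (u \star v - s)_+$.

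The only delicate point throughout is ensuring that these slice-wise manipulations descend to an $\mathcal{L}^N$-measurable function on $\R^N$ so that the layer-cake integrals make sense; but this is precisely the content of Lemma \ref{lem:3}(i), which is already established. Beyond this bookkeeping, none of the three parts presents a substantive obstacle, and the real technical work of the section is deferred to Lemma \ref{lem:2} (the $p$-energy inequality) and Theorem \ref{thm:1} (the strict inequality), both of which will invoke Lemma \ref{lem:1} as the key set-theoretic identity.
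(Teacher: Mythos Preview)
Your argument is correct and follows essentially the same route as the paper: each part is reduced to checking that the slice-wise one-dimensional distribution functions agree, using $\{(|w|-s)_+>t\}=\{|w|>s+t\}$ for (iii) and the disjoint-support decomposition for (ii). One small caveat: in the paper's logical order Lemma~\ref{lem:1} is proved \emph{before} Lemma~\ref{lem:3} (indeed, measurability of $u\star v$ is deduced from Lemma~\ref{lem:1} inside the proof of Lemma~\ref{lem:2}), so your appeal to Lemma~\ref{lem:3}(i) for measurability is circular---but harmless, since the identities here are purely pointwise and need no measurability to state or prove.
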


\begin{proof}
(i): It is clear by the definition of the coupled rearrangement.

(ii):
It is sufficient to show
\begin{align}
\notag
& \mathcal{L}^1 \left(
\left\{x_1 \in \R; (u \star v)(x_1, x') >t \right\}
\right) \\
\label{eq:2}
&=
\mathcal{L}^1 \left(
\left\{x_1 \in \R; (u + v)^\star (x_1, x') >t \right\}
\right)
\text{ for  } 
t>0,
x' \in \R^{N-1}.
\end{align} 
Fix $t>0$, $x' \in \R^{N-1}$.
By the definition of the Steiner rearrangement, we have
\begin{equation*}
\mathcal{L}^1 \left(
\left\{x_1 \in \R; (u + v)^\star (x_1, x') >t \right\}
\right)
=
\mathcal{L}^1 \left(
\left\{x_1 \in \R; |u + v| (x_1, x') >t \right\}
\right).
\end{equation*}
Since $\supp u \cap \supp v = \emptyset$, we have
\begin{align*}
& \mathcal{L}^1 \left(
\left\{x_1 \in \R; |u + v| (x_1, x') >t \right\}
\right) \\
&=
\mathcal{L}^1 \left(
\left\{x_1 \in \R; |u| (x_1, x') >t \right\}
\right)
+
\mathcal{L}^1 \left(
\left\{x_1 \in \R; |u| (x_1, x') >t \right\}
\right).
\end{align*}
On the other hand, by the definition of the coupled rearrangement,
we have
\begin{align*}
& \mathcal{L}^1 \left(
\left\{x_1 \in \R; (u \star v)(x_1, x') >t \right\}
\right) \\
&=
\mathcal{L}^1 \left(
\left\{x_1 \in \R; |u|(x_1, x') >t \right\}
\right)
+
\mathcal{L}^1 \left(
\left\{x_1 \in \R; |v|(x_1, x') >t \right\}
\right).
\end{align*}
Consequently, \eqref{eq:2} holds.

(iii):
By the definition of the coupled rearrangement, we can obtain that
\begin{align*}
 &
\mathcal{L}^1 \left(
\left\{x_1 \in \R; ((|u|-s)_+ \star (|v|-s)_+)(x_1, x') >t \right\}
\right) \\
&=
\mathcal{L}^1 \left(
\left\{x_1 \in \R; (|u|-s)_+ (x_1, x') >t \right\}
\right)
+
\mathcal{L}^1 \left(
\left\{x_1 \in \R; (|v|-s)_+(x_1, x') >t \right\}
\right) \\
&=
\mathcal{L}^1 \left(
\left\{x_1 \in \R; |u|(x_1, x') >s+t \right\}
\right)
+
\mathcal{L}^1 \left(
\left\{x_1 \in \R; |v|(x_1, x') >s+t \right\}
\right) \\
&=
\mathcal{L}^1 \left(
\left\{x_1 \in \R; (u \star v)(x_1, x') >s+t \right\}
\right) \\
&=
\mathcal{L}^1 \left(
\left\{x_1 \in \R; (u \star v -s)_+(x_1, x') >s+t \right\}
\right).
\end{align*}

\end{proof}

\begin{proof}[Proof of Lemma \ref{lem:2}]
Fix $s>0$. Since
$\lim_{|x| \to \infty} u(x) = \lim_{|x| \to \infty} v(x) =0$, 
there exists a positive constant $R=R(s)$ such that
\begin{equation*}
 \left\{x; |u(x)|>s \right\}, \left\{x; |v(x)| >s \right\} \subset B(0,R).
\end{equation*}
Putting $x_s=(3R,0)$ and $v_s(x_1,x')=v(x_1-3R, x')$, we have
\begin{equation*}
\left\{x; |v_s(x)| > s\right\} \subset B(x_s, R).
\end{equation*}
Especially, we obtain
\begin{equation}
\label{eq:10}
 \supp (|u|-s)_+ \cap \supp (|v_s|-s)_+ = \emptyset.
\end{equation}
By Lemma \ref{lem:1} (ii),
\begin{equation*}
 (|u|-s)_+ \star (|v_s|-s)_+ = \left\{(|u|-s)_+ + (|v_s|-s)_+ \right\}^\star.
\end{equation*}
By using Proposition \ref{prop:1} (iii) and \eqref{eq:10}, 
\begin{align*}
\int_{\RN} |\nabla \{(|u|-s)_+ \star (|v_s|-s)_+\}|^p  dx 
&=
\int_{\RN} |\nabla \{(|u|-s)_+ + (|v_s|-s)_+\}^\star|^p  dx \\
&\leq
\int_{\RN} |\nabla \{(|u|-s)_+ + (|v_s|-s)_+\}|^p \\
&=
\int_{\RN} |\nabla (|u|-s)_+|^p 
+
\int_{\RN} |\nabla (|v_s|-s)_+|^p \\
&=
\int_{\{x; u(x)>s\}} |\nabla u|^p 
+
\int_{\{x; v_s(x)>s\}} |\nabla v_s|^p \\
&\leq
\int_{\RN} |\nabla u|^p 
+
\int_{\RN} |\nabla v|^p.
\end{align*}
By Lemma \ref{lem:1} (iii)
\begin{equation*}
(|u|-s)_+ \star (|v|-s)_+
=
(u \star v -s)_+.
\end{equation*}
Thus we get 
\begin{equation*}
(u \star v - s)_+ 
= 
\left\{
(|u|-s)_+ + (|v|-s)_+
\right\}^\star.
\end{equation*}
Therefore, $(u \star v -s)_+$ is Lebesgue measurable for any $s>0$.
It means that $u \star v$ is Lebesgue measurable.

By Lemma \ref{lem:1} (i),
\begin{equation*}
(u-s)_+ \star (v_s-s)_+ = (u-s)_+ \star (v-s)_+.
\end{equation*}
Thus
\begin{align*}
\int_{\RN} |\nabla (u \star v -s)_+|^p dx
&=
\int_{\{x; (u \star v)(x)>s\}} |\nabla (u \star v)|^p dx \\
&=
\int_{\RN} |\nabla (u \star v)|^p \chi_{\{x; (u \star v)(x)>s\}} dx.
\end{align*}
Since $\{x; (u \star v)(x)>s\}$ converges to $\RN$ monotonically as $s \to 0$, 
we can apply the monotone convergence theorem to obtain
\begin{equation*}
\lim_{s \to 0}  
\int_{\RN} |\nabla (u \star v)|^p \chi_{\{x; (u \star v)(x)>s\}} dx
=
\int_{\RN} |\nabla (u \star v)|^p dx.
\end{equation*}
It means the conclusion. 
\end{proof}

\subsection{Proof of Theorem \ref{thm:1}}

To prove Theorem \ref{thm:1}, the next lemma is essential. 
\begin{lemma}
\label{lem:7}
Assume $f,g \in C^1(\R, \R)$, $f,g > 0$, 
$\lim_{|x| \to \infty} f(x) = \lim_{|x| \to \infty} g(x)=0$, and
$f$ and $g$ are non-increasing with respect to $|x|$.
Then the strict inequality
\begin{equation}
\label{eq:9}
\int_{\R} |(f \star g)'|^p dx
<
\int_{\R} |f'|^p dx
+
\int_{\R} |g'|^p dx
\end{equation} 
holds for $1 \leq  p< \infty$.
\end{lemma}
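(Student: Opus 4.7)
The plan is to reduce the claim to a one-dimensional pointwise inequality by exploiting the level-set description of $f\star g$. Set $M_f = f(0)$, $M_g = g(0)$, $h := f\star g$, and define the inverse-type functions
$$a(t)=\sup\{x\ge 0 : f(x)>t\}, \quad b(t)=\sup\{x\ge 0 : g(x)>t\},$$
with $c(t)$ the analogue for $h$. The defining identity of the coupled rearrangement, together with the hypothesis that $f,g$ are symmetric and non-increasing in $|x|$, forces $c(t)=a(t)+b(t)$, whence $M_h=\max(M_f,M_g)$; WLOG assume $M_f\le M_g$.

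The case $p=1$ is immediate from the total variation identity $\int_\R|f'|\,dx=2M_f$ (and analogously for $g$ and $h$): the desired strict inequality reduces to $\max(M_f,M_g)<M_f+M_g$, which is valid since $M_f, M_g>0$. For $p>1$, I would perform the change of variables $t=f(x)$. Writing $A(t):=|f'(a(t))|$ and $B(t):=|g'(b(t))|$, and using $|a'(t)|=1/A(t)$ obtained by differentiating $f(a(t))=t$, one gets
$$\int_\R|f'|^p\,dx=2\int_0^{M_f}A(t)^{p-1}\,dt,$$
and similarly for $g$ (involving $B$) and for $h$ (with $|c'|=|a'|+|b'|=1/A+1/B$, so the $h$-integrand is $(AB/(A+B))^{p-1}$). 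On $(M_f,M_g)$ one has $a\equiv 0$, hence $c\equiv b$, so that strip contribution in $\int_\R|h'|^p$ exactly cancels the corresponding piece of $\int_\R|g'|^p$. After cancellation, the lemma reduces to
$$\int_0^{M_f}\!\left(\frac{AB}{A+B}\right)^{\!p-1}\!dt<\int_0^{M_f}(A^{p-1}+B^{p-1})\,dt.$$

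The pointwise estimate is elementary: since $AB/(A+B)=1/(1/A+1/B)\le\min(A,B)$, one has $(AB/(A+B))^{p-1}\le\min(A,B)^{p-1}<A^{p-1}+B^{p-1}$, with strict inequality as soon as $A>0$ or $B>0$. To upgrade this to strict inequality of the integrals, I would verify that $\{t\in(0,M_f):A(t)>0\}$ has positive Lebesgue measure: since $f\in C^1$ is positive, non-constant, and decays to $0$ at infinity, there is an open interval on which $|f'|>\varepsilon$, whose image under $f$ gives a $t$-interval in $(0,M_f)$ of positive length on which $A>\varepsilon$.

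The main technical obstacle is justifying the change of variables when $f$ or $g$ has flat pieces. Such pieces correspond to jump discontinuities of $a$ (resp.\ $b$) at the relevant levels, where the classical identity $|a'(t)|=1/|f'(a(t))|$ fails. However, the jump levels form an at-most countable set and hence have $t$-measure zero; a clean way around the issue is to apply the co-area formula directly, or to truncate $f\mapsto (f-\varepsilon)_+$ and pass to the limit $\varepsilon\to 0$, reducing the computation to the absolutely continuous part of $a$ where the identity holds.
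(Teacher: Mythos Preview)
Your argument is correct and takes a genuinely different route from the paper. The paper invokes Duff's quantitative decreasing-rearrangement inequality (Theorem~\ref{thm:2}): it truncates at a small level $s$, translates $(g-s)_+$ to have support disjoint from $(f-s)_+$, forms the sum $h$, and exploits the bound $N_h(\lambda)\ge 4$ on the range $\lambda<\min(\max f,\max g)-s$ to gain a factor strictly smaller than $1/2^p$ in Duff's estimate. Strictness thus enters through the multiplicity count, and the low-level part $\{f\star g\le s\}$ is handled separately via Lemma~\ref{lem:2}.

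You instead work directly with the inverse profiles $a,b,c$ and the identity $c=a+b$, reduce via the co-area formula to the pointwise harmonic-mean inequality $\bigl(AB/(A+B)\bigr)^{p-1}<A^{p-1}+B^{p-1}$, and get strictness from the positivity of $A,B$ on a set of positive $t$-measure (in fact on almost every $t\in(0,M_f)$ by Sard's theorem for the $C^1$ functions $f,g$). This is more elementary and self-contained: it avoids the external citation to Duff and the truncation/translation device, and it makes transparent exactly where the gain comes from (two contributing profiles at each sub-$M_f$ level). The paper's route, on the other hand, packages the analysis into a single rearrangement inequality and would adapt more readily if $f,g$ were not already symmetric-decreasing. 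Your handling of flat pieces via co-area (or the $\varepsilon$-truncation) is adequate; the only point to make fully explicit is that $h\in W^{1,p}(\R)$ by Lemma~\ref{lem:2}, so $h$ is absolutely continuous on bounded intervals and the one-dimensional co-area formula applies to it.
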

The key ingredient of the proof of Lemma \ref{lem:7} is the quantitative version of the decreasing rearrangement inequality. 
Here we recall the decreasing rearrangement as follows.
Let $f \in PC^1([0,b])$ and let
\begin{equation*}
\mu(\lambda) = \mathcal{L^1} \left\{ x \in [0,b]; f(x) > \lambda \right\}, \quad \lambda \in \R,
\end{equation*}
where $PC^1([0,b])$ is the set of piecewise $C^1$ functions.
$f^\#(x) = \mu^{-1}(x)$ $(x \in [0,b])$ is called the decreasing rearrangement of $f$.
$N_f(\lambda)$ is the multiplicity of $f$ at the level $\lambda$, that is, 
\begin{equation*}
 N_f(\lambda) = \# \left\{ y \in [a,b]; f(y)= \lambda\right\},
\end{equation*}
where $\# A$ means the number of elements of the set $A$.
Then we have the following key results.
\begin{theorem}[{\cite[Theorem 1]{MR0257304}}]
\label{thm:2}
Let $f^\#$ be the decreasing rearrangement of $f \in PC^1([0,b])$.
For any $p \geq 1$, The following inequality holds:
\begin{equation*}
\int_0^b |(f^\#)'(x)|^p dx 
\leq 
\int_0^b \left|
\frac{f'(x)}{N_f(f(x))}
\right|^p dx.
\end{equation*}
\end{theorem}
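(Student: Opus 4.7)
The plan is to realize $(f \star g - s)_+$ as the Steiner rearrangement of an explicit two-bump function and then apply the quantitative decreasing rearrangement inequality of Theorem \ref{thm:2}. The strict inequality comes from the fact that this two-bump function has multiplicity $N \geq 4$ at levels attained by both bumps, while a single symmetric bump has only $N = 2$; the extra factor of $2$ in the denominator of Theorem \ref{thm:2} is the source of the strict loss.

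Fix $s > 0$ small. Set $F_s := (f-s)_+$ and $\tilde G_s(x) := (g(x-T) - s)_+$, with $T$ chosen so large that $\supp F_s \cap \supp \tilde G_s = \emptyset$. Let $\phi_s := F_s + \tilde G_s$. Combining translation invariance, the disjoint-support identity, and the truncation identity of Lemma \ref{lem:1}, one obtains
\[
\phi_s^\star = (f-s)_+ \star (g-s)_+ = (f \star g - s)_+.
\]
As $s \to 0^+$, both $\int_\R |(\phi_s^\star)'|^p\,dx$ and $\int_{\{f>s\}}|f'|^p\,dx + \int_{\{g>s\}}|g'|^p\,dx$ converge by monotone convergence to the two sides of \eqref{eq:9}, so it suffices to establish a strict inequality (with a uniform gap) at the level $s > 0$.

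Apply Theorem \ref{thm:2} to $\phi_s \in PC^1$ on a bounded interval containing its support. The standard $1$-dimensional identification $\phi_s^\star(x) = \phi_s^\#(2|x|)$ introduces a factor $2^p$ when passing from the one-sided $\phi_s^\#$ to the two-sided $\phi_s^\star$, so that
\[
\int_\R |(\phi_s^\star)'|^p\,dx \leq 2^p \int_\R \left|\frac{\phi_s'(x)}{N_{\phi_s}(\phi_s(x))}\right|^p dx.
\]
Assume without loss of generality $f(0) \leq g(0)$. Every value of $F_s$ is attained twice in $\supp F_s$ (by symmetry) and twice in $\supp \tilde G_s$ (because $F_s < \max \tilde G_s$), so $N_{\phi_s} = 4$ on all of $\supp F_s$; similarly $N_{\phi_s} = 4$ on the part of $\supp \tilde G_s$ where $\tilde G_s < \max F_s$, and $N_{\phi_s} = 2$ on the remainder. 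Substituting and simplifying, the right-hand side above is bounded by
\[
2^{-p}\int_{\{f>s\}}|f'|^p\,dx + \int_{\{g>s\}}|g'|^p\,dx,
\]
which is strictly less than $\int_{\{f>s\}}|f'|^p\,dx + \int_{\{g>s\}}|g'|^p\,dx$ by at least $(1 - 2^{-p})\int_{\{f>s\}}|f'|^p\,dx$. This gap is bounded below by $(1-2^{-p})\int_\R|f'|^p\,dx > 0$ uniformly in small $s$, because $f$ is $C^1$, positive, and decays to $0$, which forces $\int_\R|f'|^p\,dx > 0$. Hence the strict inequality survives the limit $s \to 0^+$.

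The main obstacle is the careful bookkeeping of the multiplicity $N_{\phi_s}$ as a function of the level $\phi_s(x)$, and recognizing that the factor $2^p$ introduced by the one-sided-versus-two-sided rearrangement conversion is \emph{exactly} cancelled by the generic multiplicity $N = 2$ coming from the symmetry of each single bump; consequently the entire strict loss comes from the extra factor of $2$ contributed by the presence of the second bump in $\phi_s$.
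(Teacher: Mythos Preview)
You have not proved Theorem~\ref{thm:2}. Your argument explicitly \emph{invokes} Theorem~\ref{thm:2} (``apply the quantitative decreasing rearrangement inequality of Theorem~\ref{thm:2}'') rather than establishing it; as a proof of the stated result this is circular. What you have actually written is a proof of Lemma~\ref{lem:7} --- the strict one-dimensional inequality \eqref{eq:9} for the coupled rearrangement --- using Theorem~\ref{thm:2} as a black box. The paper does not prove Theorem~\ref{thm:2} either: it is quoted from Duff~\cite{MR0257304}, with only the remark that his $C^1$ hypothesis can be relaxed to $PC^1$.

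Read as a proof of Lemma~\ref{lem:7}, your approach coincides with the paper's: translate one truncated bump to make the supports disjoint, identify $(f\star g - s)_+$ with the Steiner rearrangement of the two-bump sum via Lemma~\ref{lem:1}, convert to the one-sided decreasing rearrangement at the cost of a factor $2^p$, and then exploit multiplicity $\geq 4$ from Theorem~\ref{thm:2} on levels common to both bumps versus multiplicity $\geq 2$ elsewhere. There is one structural difference worth noting. The paper fixes a single small $s>0$ and finishes by adding to the strict inequality on $\{f\star g > s\}$ the non-strict inequality of Lemma~\ref{lem:2} applied to $\min\{f,s\}$ and $\min\{g,s\}$ on the complementary set $\{f\star g \leq s\}$; no limit is taken. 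You instead send $s\to 0$ and track a uniform gap. Your stated lower bound has a small slip --- the gap $(1-2^{-p})\int_{\{f>s\}}|f'|^p$ is bounded \emph{above}, not below, by $(1-2^{-p})\int_\R|f'|^p$ --- but it converges to that positive number as $s\to 0$, so the strict inequality still survives the limit and the conclusion is unaffected.
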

In {\cite[Theorem 1]{MR0257304}}, Duff showed Theorem \ref{thm:2} for $f \in C^1([0,b])$, but his proof can be modified slightly even for $f \in PC^1([0,b])$.

\begin{proof}[Proof of Lemma \ref{lem:7}]
First, we prepare the following claim.
\begin{claim*}
\begin{equation*}
 \int_{-L}^L |(f^\star)'(x)|^p dx \leq 
2^p \int_{-L}^{L} \left| \frac{f'(y)}{N_f(f(y))}\right|^p dy
\end{equation*}
for any $f \in PC^1([-L,L])$ with $f(-L)=f(L)=0$.
\end{claim*}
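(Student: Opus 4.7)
The plan is to reduce the claim to Duff's inequality (Theorem \ref{thm:2}) by translating the symmetric decreasing rearrangement $f^\star$ on $[-L,L]$ into the decreasing rearrangement $f^\#$ on $[0,2L]$ and tracking the Jacobian carefully.

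First I would establish the pointwise relation between the two rearrangements. By definition, $\{f^\star > t\}$ is the open interval centered at $0$ whose length equals the distribution function $\mu(t) = \mathcal{L}^1(\{y \in [-L,L] : f(y) > t\})$; hence $f^\star(x) > t$ if and only if $2|x| < \mu(t)$, which gives the identity
\begin{equation*}
f^\star(x) = f^\#(2|x|) \qquad \text{for } x \in [-L,L],
\end{equation*}
where $f^\#$ is the decreasing rearrangement on $[0, 2L]$.

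Next, since $f \in PC^1([-L,L])$, the distribution function $\mu$ is piecewise $C^1$ with piecewise nonzero derivative, so $f^\#$ is piecewise $C^1$ on $[0,2L]$ and the chain rule gives $(f^\star)'(x) = 2\,\mathrm{sign}(x)\,(f^\#)'(2|x|)$ almost everywhere. Substituting $y = 2|x|$ and exploiting the symmetry of $f^\star$ yields
\begin{equation*}
\int_{-L}^L |(f^\star)'(x)|^p\,dx = 2\int_0^L 2^p |(f^\#)'(2x)|^p\,dx = 2^p \int_0^{2L} |(f^\#)'(y)|^p\,dy,
\end{equation*}
which explains the factor $2^p$ in the claim.

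Finally I would apply Theorem \ref{thm:2} on the interval $[0, 2L]$. Since the Steiner and decreasing rearrangements are translation-invariant and Duff's theorem is stated for $PC^1([0,b])$ with vanishing endpoint values, I would consider the translate $\tilde f(y) = f(y-L)$ on $[0,2L]$, which satisfies $\tilde f(0) = \tilde f(2L) = 0$ and shares the distribution function, decreasing rearrangement, and multiplicity of $f$. Theorem \ref{thm:2} applied to $\tilde f$ gives
\begin{equation*}
\int_0^{2L} |(f^\#)'(y)|^p\,dy \leq \int_0^{2L} \left| \frac{\tilde f'(y)}{N_{\tilde f}(\tilde f(y))} \right|^p dy = \int_{-L}^L \left| \frac{f'(y)}{N_f(f(y))} \right|^p dy,
\end{equation*}
where the last step undoes the translation. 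Combining with the previous display proves the claim. The only delicate point is ensuring that the a.e.\ chain-rule computation for $f^\star$ is valid under the $PC^1$ hypothesis; this is routine because both $\mu$ and its inverse are piecewise smooth, and the finitely many singular levels contribute zero measure, so there is no essential obstacle.
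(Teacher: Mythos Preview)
Your proof is correct and follows essentially the same route as the paper: both arguments exploit the identity $f^\star(x)=g^\#(2x)$ (with $g$ the translate of $f$ to $[0,2L]$) to produce the factor $2^p$, and then invoke Theorem~\ref{thm:2}. The only cosmetic difference is that the paper translates $f$ to $g(x)=f(x-L)$ at the outset and works with $g^\#$, whereas you relate $f^\star$ to $f^\#$ first and perform the translation only when applying Duff's inequality; since $f$ and its translate share the same distribution function and multiplicity, the two orderings are equivalent.
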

Put $g(x)=f(x-L)$. Then $g \in PC^1([0,2L])$. Since $f$ and $g$ are equimeasurable, 
by using the definition of rearrangements, we can obtain
\begin{equation*}
f^\star(x) = g^\#(2x) \text{ for } x \in [0,L].
\end{equation*}
Thus we have
\begin{equation*}
 \int_{-L}^L |(f^\star)'(x)|^p dx = 2^p \int_0^{2L} |(g^\#)'(y)|^p dy.
\end{equation*}
Applying Theorem \ref{thm:2}, we get
\begin{equation*}
\int_0^{2L} |(g^\#)'(y)|^p dy
\leq 
\int_{0}^{2L} \left| \frac{g'(y)}{N_g(g(y))}\right|^p dy
=
\int_{-L}^{L} \left| \frac{f'(y)}{N_f(f(y))}\right|^p dy.
\end{equation*}
Therefore, the claim holds.

Next, let $f$ and $g$ satisfy the assumptions the lemma.
For sufficiently small $s>0$, we have that $(f-s)_+ \not \equiv 0$ and 
$(g-s)_+ \not \equiv 0$.
Since each support of $(f-s)_+$ and $(g-s)_+$ is compact, 
there are large $x_0$ and $L$ such that
\begin{align*}
& \supp (f-s)_+ \cap \supp (g(\cdot - x_0)-s)_+ = \emptyset, \\
& h= \supp (f-s)_+ + \supp (g(\cdot - x_0)-s)_+\in PC^1([-L,L]), \\
& h(-L)=h(L).
\end{align*}
Thus, we can apply the above claim to obtain
\begin{equation}
\label{eq:11}
\int_{-L}^L |(h^\star)'(x)|^p dx \leq 
2^p \int_{-L}^{L} \left| \frac{h'(y)}{N_h(h(y))}\right|^p dy
\end{equation}
By Lemma \ref{lem:1} (i) and (iii), we have
\begin{equation}
\label{eq:12}
\int_{\{x; (f \star g)(x) > s\}} |(f \star g)'(x)|^p dx
=
\int_\R |(f \star g - s)_+'(x)|^p dx
=
\int_{-L}^L |(h^\star)'(x)|^p dx.
\end{equation}
On the other hand, since $(f-s)_+ \in PC^1([-L,L])$, $(f-s)_+ \not=0$, and $(f(-L)-s)_+=(f(L)-s)_+=0$,
we have
\begin{equation*}
 N_f(\lambda) \geq 2 \text{ for } \lambda \in \left[0,\max_{\R} f -s\right).
\end{equation*}
Similarly about $g$, we have
\begin{equation*}
 N_g(\lambda) \geq 2 \text{ for } \lambda \in \left[0,\max_{\R} g -s\right).
\end{equation*}
Therefore, we obtain
\begin{align*}
& N_h(\lambda) \geq 2 \text{ for } \lambda \in \left[0,\max \{\max_{\R} f, \max_{\R} g\}  -s\right), \\
& N_h(\lambda) \geq 4 \text{ for } \lambda \in \left[0,\min \{\max_{\R} f, \max_{\R} g\}  -s\right).
\end{align*}
It asserts that%\footnote{layer cake representation}
\begin{equation}
\label{eq:13}
\int_{-L}^{L} \left| \frac{h'(y)}{N_h(h(y))}\right|^p dy
<
\frac{1}{2^p} \int_{-L}^{L} |h'(y)|^p dy.
\end{equation}
By the definition of $g$, it is clear that
\begin{equation}
\label{eq:14}
\int_{\{x; f(x)>s\}} |f'(x)|^p dx
+
\int_{\{x; g(x)>s\}} |g'(x)|^p dx
=
\int_{-L}^L |h'(x)|^p dx.
\end{equation}
Combining \eqref{eq:11}, \eqref{eq:12}, \eqref{eq:13}, and \eqref{eq:14}, 
we get
\begin{equation}
\label{eq:15}
\int_{\{x; (f \star g)(x) > s\}} |(f \star g)'(x)|^p dx
<
\int_{\{x; f(x)>s\}} |f'(x)|^p dx
+
\int_{\{x; g(x)>s\}} |g'(x)|^p dx.
\end{equation}
Moreover, we can apply Lemma \ref{lem:2} for $\min \{f,s\}$ and 
$\min \{g,s\}$ to obtain
\begin{align}
\notag
\int_{\{x; (f \star g)(x) \leq s\}} |(f \star g)'(x)|^p dx
&= \int_{\R} |(\min\{f \star g, s\})'(x))|^p dx \\
\notag
&=
 \int_{\R} |(\min\{f, s\} \star \min\{g,s\})'(x))|^p dx \\
\label{eq:16}
&\leq
 \int_{\{x; f(x) \leq s\}} |f'(x)|^p dx
+
 \int_{\{x; g(x) \leq s\}} |g'(x)|^p dx.
\end{align}
\eqref{eq:15} and \eqref{eq:16} complete the lemma.
\end{proof}

Now, we can prove Theorem \ref{thm:1}.
\begin{proof}[Proof of Theorem \ref{thm:1}]
Let $u$ and $v$ be functions  satisfying that 
$u, v \in W^{1,p}(\RN) \cap C^1(\RN)$,
$u,v >0$, 
$\lim_{|x| \to \infty} u(x) = \lim_{|x| \to \infty} v(x) = 0$, 
and $u(x_1, x'), v(x_1, x')$ are monotone decreasing with respect to $|x_1|$.
By using Lemma \ref{lem:7}, we have
\begin{equation*}
\int_{\R} |\partial_1 (u \star v)(x_1, x')|^p dx_1
<
\int_{\R} |\partial_1 u(x_1, x')|^p dx_1
+
\int_{\R} |\partial_1 v(x_1,x')|^p dx_1
\end{equation*} 
for any $x' \in \R^{N-1}$.
Integrating with respect to $x'$ over $\R^{N-1}$, we get
\begin{equation*}
\int_{\RN} |\partial_1 (u \star v)|^p dx
<
\int_{\RN} |\partial_1 u|^p dx
+
\int_{\RN} |\partial_1 v|^p dx.
\end{equation*} 
On the other hand, By Lemma \ref{lem:2}, we have
\begin{equation*}
\int_{\RN} |\partial_i (u \star v)|^p dx
\leq
\int_{\RN} |\partial_i u|^p dx
+
\int_{\RN} |\partial_i v|^p dx \text{ for } i=2,\dots, N.
\end{equation*} 
Therefore, we obtain the theorem.
\end{proof}

\section{Application: the subadditivity condition}
\label{sec:3}

For given $\alpha >0$, we consider the following $L^2$-constraint minimizing problem.
\begin{align*}
 E_\alpha =& \inf_{u \in M_\alpha} I[u], \\
I[u]=&
\frac{1}{2} \int_{\RN} |\nabla u|^2 dx 
-\int_{\RN} F(u) dx, \\
M_\alpha=&
\left\{
u \in H^1(\RN); \|u\|_{L^2(\RN)}^2=\alpha
\right\},
\end{align*}
where $F$ satisfies the following assumptions.
\begin{itemize}
 \item[(F1)] $f \in C(\C,\C)$, $f(0)=0$. 
 \item[(F2)] 
$f(r) \in \R$ for $r \in\R$,
$f(e^{i \theta }z)=e^{i \theta}f(z)$ for $\theta \in \R$, $z \in \C$, 
and $F(s)=\int_0^s f(\tau) d \tau$.
 \item[(F3)] $\lim_{z \to 0} f(z)/|z|=0$.
 \item[(F4)] $\lim_{|z| \to \infty} f(z)/|z|^{l-1}=0$, where $l=2+4/N$.
% \item[(F5)] $\liminf_{s \to 0+0} F(s)/s^l=\infty$, where $F(s)=\int_0^s f(\tau) d\tau$.
\end{itemize}
Moreover, we assume that the energy $E_\alpha$ is negative, that is, 
\begin{itemize}
 \item[(E1)] $E_\alpha<0$ for $\alpha>0$.
\end{itemize}
We remark that the condition (E1) is satisfied if $\liminf_{s \to 0} F(s)/s^l = \infty$. (See \cite{shibata-manumath}.)
In \cite{shibata-manumath}, $H^1$-precompactness of minimizing sequences was studied under more general conditions. 
In this section, we give an another proof by using Theorem \ref{thm:1}.

Throughout this section, we assume (F1)--(F4) and (E1) always.
About the energy $E_\alpha$, the following conditions holds.
\begin{lemma}[{\cite[Lemma 2.3]{shibata-manumath}}]
\label{lem:6}
 \begin{enumerate}
  \item $E_{\alpha+\beta} \leq E_\alpha + E_\beta$ for any $\alpha, \beta>0$.
  \item $E_\alpha < E_\beta$ if $\alpha > \beta$.
  \item $\alpha \mapsto E_\alpha$ is continuous on
	$[0,\infty)$.
 \end{enumerate}
\end{lemma}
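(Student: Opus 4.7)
The three assertions are handled in sequence: (i) by a translation argument, (ii) as an immediate consequence of (i) combined with (E1), and (iii) by a rescaling argument together with a coercivity estimate that follows from (F3)--(F4) and Gagliardo--Nirenberg.

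For (i), fix $\epsilon > 0$ and choose $u \in M_\alpha$, $v \in M_\beta$ with $I(u) \leq E_\alpha + \epsilon$ and $I(v) \leq E_\beta + \epsilon$. Using the density of $C_c^\infty(\RN)$ in $H^1(\RN)$ together with the continuity of $I$ on $H^1(\RN)$ (which holds thanks to (F1), (F3), (F4) and the Sobolev embedding into $L^2 \cap L^l$ with $l = 2 + 4/N$), replace $u, v$ by compactly supported approximants still satisfying $\|u\|_{L^2}^2 = \alpha$ and $\|v\|_{L^2}^2 = \beta$ (a small multiplicative renormalization absorbs the truncation error in $L^2$). Picking $y \in \RN$ with $|y|$ so large that $\supp u$ and $y + \supp v$ are disjoint, the function $w(x) = u(x) + v(x-y)$ lies in $M_{\alpha+\beta}$, and since $F(0) = 0$ the energies split: $I(w) = I(u) + I(v) \leq E_\alpha + E_\beta + 2\epsilon$. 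Sending $\epsilon \to 0$ yields (i).

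Part (ii) is immediate: for $\alpha > \beta > 0$, (i) applied to $\alpha = \beta + (\alpha - \beta)$ gives $E_\alpha \leq E_\beta + E_{\alpha - \beta}$, and (E1) supplies $E_{\alpha - \beta} < 0$.

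For (iii) I split into upper semicontinuity, lower semicontinuity, and continuity at $0$. Upper semicontinuity at $\alpha_0 > 0$ comes from rescaling: for $u \in M_{\alpha_0}$ with $I(u)$ close to $E_{\alpha_0}$, set $u_\alpha = \sqrt{\alpha/\alpha_0}\, u \in M_\alpha$; dominated convergence (using the bound $|F(z)| \leq C|z|^2 + C|z|^l$ from (F3)--(F4)) gives $I(u_\alpha) \to I(u)$ as $\alpha \to \alpha_0$. Lower semicontinuity is the delicate step: for near-minimizers $u_n \in M_{\alpha_n}$, combine (F3)--(F4) with the Gagliardo--Nirenberg estimate $\|u\|_{L^l}^l \leq C \|u\|_{L^2}^{l-2} \|\nabla u\|_{L^2}^2$ (whose exponents match precisely because $l = 2+4/N$) to produce, for $\alpha_n$ in a bounded range, a bound of the form $\int F(u_n) \leq \tfrac14 \|\nabla u_n\|_{L^2}^2 + C$, hence uniform $H^1$-boundedness of $u_n$. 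Then $\tilde u_n = \sqrt{\alpha_0/\alpha_n}\, u_n \in M_{\alpha_0}$ satisfies $I(\tilde u_n) - I(u_n) \to 0$, so $E_{\alpha_0} \leq \liminf E_{\alpha_n} + \epsilon$. Continuity at $\alpha_0 = 0$ follows from $E_\alpha < 0$ together with the lower bound $E_\alpha \geq -C\alpha$ obtained from the same coercivity estimate. The main obstacle is the uniform $H^1$-bound for near-minimizers; once that is in hand, the rest is routine approximation, translation and rescaling.
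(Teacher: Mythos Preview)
The paper does not give its own proof of this lemma: it is quoted verbatim from \cite{shibata-manumath} with only the citation. Your argument is correct in all three parts and would serve as a complete proof.

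The closest comparison available is the paper's proof (in the Appendix) of the analogous two-component result, Lemma~\ref{lem:4}. For parts (i) and (ii) your approach coincides with that proof: disjoint-support translation for the subadditivity, then (E1) for strict monotonicity. For part (iii) the methods diverge. You use \emph{amplitude} rescaling $u \mapsto \sqrt{\alpha/\alpha_0}\,u$, which keeps the domain fixed and forces you to control $\int F(\sqrt{\alpha/\alpha_0}\,u) - \int F(u)$ via the growth bounds (F3)--(F4) and dominated convergence; this in turn requires the uniform $H^1$-bound you correctly extract from Gagliardo--Nirenberg. The paper instead uses \emph{dilation} rescaling $u_t(x)=u(x/t)$ with $t^N$ close to the mass ratio: then $\int_{\RN} F(u_t)\,dx = t^N \int_{\RN} F(u)\,dx$ exactly, so the nonlinear term scales multiplicatively without any appeal to dominated convergence, and the only error term is $\tfrac{1}{2}t^{N-2}|1-t^2|\,\|\nabla u\|_{L^2}^2$, controlled again by the coercivity estimate (your Lemma~\ref{lem:9} analogue). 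Both routes need the same $H^1$-coercivity ingredient; the dilation route is slightly cleaner because it avoids estimating $F$ at perturbed arguments, while your amplitude route is more elementary in that it never changes the spatial scale. Either is perfectly adequate here.
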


\begin{lemma}
\label{lem:5}
For any $\alpha>0$, there exists a global minimizer $u \in M_\alpha$.
\end{lemma}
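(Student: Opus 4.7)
My plan is the classical direct method, exploiting two specific ingredients: the Schwartz rearrangement of Proposition \ref{prop:1} to gain radial compactness, and the strict monotonicity of $E_\alpha$ from Lemma \ref{lem:6}(ii) to prevent loss of $L^2$-mass in the weak limit. Start with any minimizing sequence $\{u_n\} \subset M_\alpha$. Since $I$ depends on $u$ only through $|u|$ by (F2), and $|\nabla |u|| \leq |\nabla u|$ pointwise, I may assume $u_n \geq 0$. Applying the Schwartz rearrangement preserves $\|u_n\|_{L^2}^2$ and $\int_{\RN} F(u_n)$ while not increasing $\int_{\RN} |\nabla u_n|^2$, so after this reduction I may further assume each $u_n$ is radial and radially nonincreasing, and the sequence is still minimizing.

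Next, I verify boundedness of $\{u_n\}$ in $H^1(\RN)$. The growth conditions (F3)--(F4) together with the mass-critical Gagliardo--Nirenberg inequality $\|u\|_{L^l}^l \leq C \|\nabla u\|_{L^2}^2 \|u\|_{L^2}^{4/N}$, where $l = 2 + 4/N$, yield an estimate of the form $I[u_n] \geq c \|\nabla u_n\|_{L^2}^2 - C(\alpha)$ from which boundedness follows. Passing to a subsequence, $u_n \rightharpoonup u_\infty$ weakly in $H^1(\RN)$, and by the compact radial embedding $H^1_{\rad}(\RN) \hookrightarrow L^l(\RN)$ recalled in the introduction, $u_n \to u_\infty$ strongly in $L^l(\RN)$. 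A standard splitting of $F$ via (F3)--(F4), combined with boundedness in $L^2 \cap L^l$ and strong $L^l$ convergence, then gives $\int_{\RN} F(u_n) \to \int_{\RN} F(u_\infty)$; weak lower semicontinuity of the Dirichlet integral gives $\|\nabla u_\infty\|_{L^2}^2 \leq \liminf \|\nabla u_n\|_{L^2}^2$, hence $I[u_\infty] \leq E_\alpha$.

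Finally, I identify $u_\infty$ as a minimizer by ruling out loss of mass. Set $\beta := \|u_\infty\|_{L^2}^2$; weak lower semicontinuity gives $\beta \leq \alpha$. If $\beta = 0$ then $u_\infty \equiv 0$ and the $F$-continuity argument yields $\int_{\RN} F(u_n) \to 0$, which forces $E_\alpha = \lim I[u_n] \geq 0$, contradicting (E1). If $0 < \beta < \alpha$ then $u_\infty \in M_\beta$ gives $E_\beta \leq I[u_\infty] \leq E_\alpha$, contradicting the strict monotonicity $E_\alpha < E_\beta$ from Lemma \ref{lem:6}(ii). Thus $\beta = \alpha$, so $u_\infty \in M_\alpha$ with $I[u_\infty] = E_\alpha$. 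The main technical obstacle I expect is the careful splitting needed to pass $\int_{\RN} F(u_n) \to \int_{\RN} F(u_\infty)$ using only the qualitative assumptions (F3)--(F4); once that is in hand, Lemma \ref{lem:6}(ii) does all the work of preventing mass escape.
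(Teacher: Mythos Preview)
Your argument is correct and follows exactly the route the paper sketches: Schwartz rearrangement to reduce to radially nonincreasing minimizing sequences, the compact embedding $H^1_{\rad}(\RN)\hookrightarrow L^l(\RN)$ to pass to the limit in the nonlinear term, and then (E1) (through Lemma~\ref{lem:6}(ii)) to prevent loss of $L^2$-mass. The paper omits the details entirely, so your write-up is precisely the kind of proof the authors have in mind; your handling of the $F$-term via the splitting $|f(s)|\le \epsilon|s|+C_\epsilon|s|^{l-1}$ and the monotonicity argument for the mass are the standard fillers for this omission.
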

By using the Schwartz rearrangement, (E1),  and compactness of embedding $H^1_{\rad}(\RN) \subset L^p(\RN)$, 
we can obtain a global minimizer. We omit the proof of Lemma \ref{lem:5}.

By using Lemma \ref{lem:5} and the coupled rearrangement, 
we can show the subadditivity condition. Thus we get the following Proposition \ref{prop:2}.
\begin{proposition}
\label{prop:2}
Suppose that (F1)--(F4) and (E1). 
Then, the subadditivity condition \eqref{eq:18} holds. 
Moreover, any minimizing sequence $\{u_n\}_{n \in \N} \subset M_\alpha$
with respect to $E_\alpha$ is precompact. That is, 
taking a subsequence if necessary, there exist $u \in M_\alpha$ and a family $\{y_n\}_{n \in \N} \subset \RN$ such that
$\lim_{n \to \infty} u_n(\cdot - y_n) = u$ in $H^1(\RN)$.
In particular, $u$ is a global minimizer.
\end{proposition}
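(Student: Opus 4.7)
The plan is to prove the strict subadditivity \eqref{eq:18} by exhibiting a strict competitor in $M_{\alpha+\beta}$ built from the coupled rearrangement of two actual minimizers, and then to upgrade \eqref{eq:18} into $H^1$-precompactness of every minimizing sequence via Lions' concentration-compactness principle.

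For the subadditivity, I first invoke Lemma \ref{lem:5} to pick minimizers $u\in M_\alpha$ and $v\in M_\beta$. Since (F2) implies $F$ is even on $\R$, Proposition \ref{prop:1} lets me replace $u$ and $v$ by $|u|^\star$ and $|v|^\star$ without changing $I$ or the $L^2$-constraint; so I may assume $u, v\geq 0$ are Steiner-symmetric and in particular monotone in $|x_1|$. Each then solves the Euler–Lagrange equation $-\Delta u + \mu u = f(u)$ for some Lagrange multiplier $\mu$; elliptic bootstrapping under the subcritical growth (F3)--(F4) gives $u, v\in C^1(\RN)$ and decay at infinity, while writing $c(x):=\mu-f(u(x))/u(x)$ as a bounded continuous coefficient (using (F3)) and applying the strong maximum principle to $(-\Delta + c)u = 0$ yields $u>0$ everywhere, and likewise $v>0$. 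Every hypothesis of Theorem \ref{thm:1} with $p=2$ is therefore met.

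I next set $w:=u\star v$. Lemma \ref{lem:3}(i) with $p=2$ yields $w\in M_{\alpha+\beta}$. Because $u, v\in L^\infty$ by regularity, the restriction of $F$ to $[0,K]$ with $K:=\max(\|u\|_{L^\infty},\|v\|_{L^\infty})$ is a $C^1$ function of bounded variation, hence admits a Jordan decomposition $F=\Phi_1+\Phi_2$ with $\Phi_1$ nondecreasing and $\Phi_2$ nonincreasing, extended monotonically outside $[0,K]$ without affecting the integrals; Lemma \ref{lem:3}(ii) then gives $\int_{\RN}F(w)\,dx=\int_{\RN}F(u)\,dx+\int_{\RN}F(v)\,dx$. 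Combining this equality with the strict kinetic inequality of Theorem \ref{thm:1} delivers $I(w)<I(u)+I(v)=E_\alpha+E_\beta$, so $E_{\alpha+\beta}\leq I(w)<E_\alpha+E_\beta$, which is \eqref{eq:18}.

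For the precompactness, let $\{u_n\}\subset M_\alpha$ be a minimizing sequence. Conditions (F3)--(F4) with (E1) make $\{u_n\}$ bounded in $H^1(\RN)$. Applying Lions' concentration-compactness lemma to $\rho_n=|u_n|^2$, vanishing is ruled out because it would force $u_n\to 0$ in $L^{2+4/N}(\RN)$, hence $\int F(u_n)\,dx\to 0$ and $\liminf I(u_n)\geq 0>E_\alpha$; dichotomy into masses $\alpha_1+\alpha_2=\alpha$ with $\alpha_1,\alpha_2>0$ is ruled out by the strict subadditivity just proved, which would give $\liminf I(u_n)\geq E_{\alpha_1}+E_{\alpha_2}>E_\alpha$. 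The remaining concentration alternative furnishes $y_n\in\RN$ with $u_n(\cdot-y_n)\rightharpoonup u$ in $H^1$ and $\|u\|_{L^2}^2=\alpha$; the subcritical growth of $f$ together with the tightness gives $\int F(u_n(\cdot-y_n))\,dx\to\int F(u)\,dx$, and then weak lower semicontinuity of the Dirichlet integral combined with $I(u)\leq E_\alpha=I(u)$ forces $\|\nabla u_n(\cdot-y_n)\|_{L^2}\to\|\nabla u\|_{L^2}$, so the convergence is strong in $H^1$ and $u$ is a global minimizer.

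The main obstacle I anticipate is verifying the restrictive hypotheses of Theorem \ref{thm:1} for the minimizers, namely $C^1$-regularity, decay at infinity, and especially strict positivity, under the general nonlinearity conditions (F1)--(F4) where no sign or monotonicity of $f$ is assumed. The strong maximum principle step is the crux and hinges critically on (F3) to control the coefficient $f(u)/u$; without positivity, one would have to revisit the multiplicity bound $N_h\geq 4$ inside the proof of Lemma \ref{lem:7} to still obtain the strict inequality, which is technically more delicate.
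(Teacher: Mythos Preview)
Your proof is correct and follows essentially the same strategy as the paper: use Lemma~\ref{lem:5} to obtain minimizers, verify via elliptic regularity that they meet the hypotheses of Theorem~\ref{thm:1}, apply the coupled rearrangement to derive strict subadditivity, and then infer precompactness from Lions' concentration-compactness framework. You supply considerably more detail than the paper---the Steiner-symmetrization step to secure $|x_1|$-monotonicity, the strong maximum principle for strict positivity, the Jordan decomposition of $F$ to invoke Lemma~\ref{lem:3}(ii) (your reference to ``(i) with $p=2$'' is a slip), and the full vanishing/dichotomy/compactness trichotomy---whereas the paper simply invokes elliptic regularity in one line and delegates the precompactness conclusion to Cazenave--Lions~\cite{MR677997}.
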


\begin{proof}[Proof of Proposition \ref{prop:2}]
By the results in \cite{MR677997}, %Cazenave-Lions
it is sufficient to show the subadditivity condition \eqref{eq:18}.
For $\alpha, \beta >0$, 
Lemma \ref{lem:5} asserts that there exist global minimizers 
$u$ and $v$ with respect to $E_\alpha$ and $E_\beta$.
By the elliptic regularity theory, 
$u,v \in C^1(\RN)$ satisfy the condition (A).
Thus we can apply Lemma \ref{lem:3} and Theorem \ref{thm:1} to obtain
\begin{equation*}
E_{\|u \star v\|_{L^2(\RN)}^2} \leq 
 I[u \star v] < I[u] + I[v] =E_{\alpha} + E_{\beta}, \quad
\|u \star v\|_{L^2(\RN)}^2 = \alpha + \beta.
\end{equation*}
Hence \eqref{eq:18} holds.
\end{proof}

\section{Application to $L^2$ constraint minimizing problems related to semi linear elliptic systems}
\label{sec:4}

In this section, we consider the following $L^2$-constraint minimizing problem.

\begin{align*}
 E_{\alpha,\beta} &= \inf_{(u,v) \in M_{\alpha,\beta}} J[u,v], \\
 J[u,v] &= \frac{1}{2} \int_{\RN} |\nabla u|^2 + |\nabla v|^2 dx
-
\int_{\RN} G(|u|^2, |v|^2) dx, \\
M_{\alpha, \beta} &=
\left\{
(u,v) \in H^1(\RN) \times H^1(\RN); 
\|u\|_{L^2(\RN)}^2=\alpha,
\|v\|_{L^2(\RN)}^2=\beta
\right\},
\end{align*}
where $\alpha$ and $\beta$ are nonnegative given constants.
We assume the nonlinear term $G(s)=G(s_1, s_2)$ satisfies that
\begin{description}
 \item[(G1)] $G \in C^1([0,\infty)\times[0,\infty), \R)$, $G(0)=0$.
% and $G$ is a nonnegative function.
 \item[(G2)]  
	    $\lim_{|s| \to 0} g_j(s)=0$ $(j=1,2)$, where $g_j(s)=\frac{\partial G}{\partial s_j}(s)$ $(j=1,2)$.
 \item[(G3)] $\lim_{|s| \to \infty} g_j(s)/|s|^{2/N}=0$ $(j=1,2)$.
 \item[(G4)] $g_j$ is nondecreasing, that is, $g_j(s,t) \leq g_j(s+h,t+k)$ for $s,t,h,k \geq 0$ $(j=1,2)$.
 \item[(G5)] There exists $\sigma>0$ such that $G(s_1,0) + G(0,s_2) < G(s_1, s_2)$ for $0< s_1, s_2 \leq \sigma$.
\end{description}
Moreover, we suppose that
\begin{itemize}
 \item[(E2)] $E_{\alpha, 0}, E_{0,\beta} <0$ for any $\alpha, \beta>0$.
\end{itemize}
%\begin{remark}
%% \item[(G4)] $G(s+h,t+k) + G(s,t) \geq G(s+h,t) + G(s,t+k)$ holds for
%%	    $h,k,s,t \geq 0$.
%%\footnote{より強い条件に。} 
%\end{remark}

%\begin{remark}
%\begin{equation*}
% \liminf_{s \to 0} \frac{G(s)}{s^{1+2/N}} =\infty
%\end{equation*}
%ならOK。
%もう少し弱くするなら、
%\begin{equation*}
% \liminf_{s_1 \to 0} \frac{G(s_1,0)}{s_1^{1+2/N}} =\infty, \quad
% \liminf_{s_2 \to 0} \frac{G(0,s_2)}{s_2^{1+2/N}} =\infty
%\end{equation*}
%が成り立てば十分。
%\end{remark}
This type problem was studied in \cite{MR2548703}. 
In \cite{MR2548703}, they proved the existence of global minimizers.
Our goal in this section is to show $H^1$-precompactness of minimizing sequences as follows.
\begin{theorem}
\label{thm:3}
Assume (G1)--(G5), and (E2).
For $\alpha,\beta \geq 0$,
any minimizing sequence $\{(u_n,v_n)\}_{n \in \N} \subset H^1(\RN) \times H^1(\RN)$
with respect to $E_{\alpha,\beta}$ is pre-compact.
That is, taking a subsequence, there exist
$(u,v) \in M_{\alpha,\beta}$ and $\{y_n\}_{n \in \N} \subset \RN$
such that 
\begin{equation*}
u_n(\cdot - y_n) \to u, \quad
v_n(\cdot - y_n) \to v, \quad
\text{ in } H^1(\RN)
\text{ as } n \to \infty.
\end{equation*}
\end{theorem}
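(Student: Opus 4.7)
The plan is to apply Lions' concentration--compactness lemma to the minimizing sequence $(u_n,v_n)$, with the strict vector-valued subadditivity of $(\alpha,\beta)\mapsto E_{\alpha,\beta}$ --- obtained from the coupled rearrangement via Theorem \ref{thm:1} --- excluding the dichotomy alternative. I would begin by collecting the direct analogues of Lemma \ref{lem:6}: boundedness below via Gagliardo--Nirenberg and (G3); non-strict subadditivity by a cut-off construction; monotonicity and continuity of $(\alpha,\beta) \mapsto E_{\alpha,\beta}$; and, crucially, existence of a radially symmetric positive minimizer $(u_{\alpha,\beta}, v_{\alpha,\beta})$ for every $(\alpha,\beta)$ with $\alpha+\beta>0$. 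The existence argument proceeds through componentwise Schwartz rearrangement: the $L^2$-norms are preserved, the kinetic energy does not increase, and because (G4) forces the supermodularity $\partial_{s_1}\partial_{s_2}G \geq 0$, a Riesz--Sobolev-type inequality gives $\int G(|u|^2,|v|^2)\,dx \leq \int G(|u^*|^2,|v^*|^2)\,dx$. Compactness of $H^1_{\rad}(\RN) \hookrightarrow L^q(\RN)$ for $2<q<2+4/N$, together with (G2)--(G3) and (E2) (which force $E_{\alpha,\beta}<0$), then yields the minimizer; elliptic regularity and the strong maximum principle applied to the associated Euler--Lagrange system make it $C^1$ and strictly positive.

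The heart of the argument is the strict subadditivity
\begin{equation*}
E_{\alpha_1+\alpha_2,\,\beta_1+\beta_2} < E_{\alpha_1,\beta_1} + E_{\alpha_2,\beta_2}
\end{equation*}
for every nontrivial decomposition. For a mixed split, I would take minimizers $(u_i,v_i)$ of $E_{\alpha_i,\beta_i}$ (positive, $C^1$, Schwartz-symmetric, hence monotone in $|x_1|$) and form the coupled rearrangements $w_1 = u_1 \star u_2$ and $w_2 = v_1 \star v_2$ in the $x_1$-direction. Lemma \ref{lem:3}(ii) gives $(w_1,w_2) \in M_{\alpha_1+\alpha_2,\beta_1+\beta_2}$, while Theorem \ref{thm:1} with $p=2$ provides the strict kinetic decrease for each component. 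For the nonlinear term the inequality
\begin{equation*}
\int_{\RN} G(w_1^2, w_2^2)\,dx \;\geq\; \int_{\RN} G(u_1^2, v_1^2)\,dx + \int_{\RN} G(u_2^2, v_2^2)\,dx
\end{equation*}
would follow by slicing in $x_1$ at fixed $x'$: on each slice, the super-level sets of $w_j(\cdot,x')$ are centered intervals whose lengths add those of $u_i(\cdot,x')$ and $v_i(\cdot,x')$, and the pointwise supermodular estimate $G(a+a',b+b') + G(0,0) \geq G(a,b) + G(a',b')$ integrates to the claim. For the degenerate pure split $(\alpha,\beta)=(\alpha,0)+(0,\beta)$, where one component in each minimizer vanishes, I would instead glue translates of the single-species minimizers at large distance and exploit (G5) on a small region where both species coexist at level $\leq \sigma$ to extract the required strict gain.

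The main obstacle will be the concentration--compactness step itself carried out for a two-component system. Vanishing is handled by $E_{\alpha,\beta}<0$ combined with a Lieb-type translation argument applied to the nontrivial component. Dichotomy is delicate because the two components may concentrate at different rates or locations, so the splitting cut-offs must be chosen simultaneously for both $u_n$ and $v_n$, producing mass distributions $(\alpha_j,\beta_j)$ in the two-dimensional mass parameter. The customary scalar concentration function must therefore be replaced by its vector counterpart; one must verify that the resulting asymptotic energies satisfy $E_{\alpha,\beta}\geq E_{\alpha_1,\beta_1}+E_{\alpha_2,\beta_2}$, which contradicts the strict subadditivity just established unless one of the mass pairs is trivial. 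This contradiction, together with the nontriviality supplied by (E2), forces precompactness of $(u_n(\cdot-y_n), v_n(\cdot-y_n))$ in $H^1(\RN)\times H^1(\RN)$ and produces the claimed limit $(u,v)\in M_{\alpha,\beta}$.
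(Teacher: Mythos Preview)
Your strategy is essentially the paper's: obtain strict subadditivity of $(\alpha,\beta)\mapsto E_{\alpha,\beta}$ from Theorem~\ref{thm:1} together with a coupled-rearrangement inequality for the nonlinear term, and use this to exclude dichotomy in a concentration--compactness argument. Two points deserve sharper treatment.

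First, your justification of $\int G(w_1^2,w_2^2)\,dx \geq \int G(u_1^2,v_1^2)\,dx+\int G(u_2^2,v_2^2)\,dx$ via ``the pointwise supermodular estimate $G(a+a',b+b')\geq G(a,b)+G(a',b')$ integrates to the claim'' is not literally correct: the coupled rearrangement is not a pointwise sum, so one cannot integrate a pointwise inequality. The paper proves this separately (Lemma~\ref{lem:10}) by a layer-cake decomposition, using that the super-level sets of $u\star\phi$ and $v\star\psi$ are \emph{nested} centered intervals, whence $\min(|A|,|B|)=|A\cap B|$ for the relevant sets; your slicing remark points in this direction but does not carry the argument.

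Second, the paper does not run a ``vector concentration function'' for the two-component dichotomy, and organizes the argument differently from your plan of proving strict subadditivity in full generality beforehand. It works component by component: it first finds translates $y_n$ with $u_n(\cdot-y_n)\rightharpoonup u\neq 0$ and shows $\|u\|_{L^2}^2=\alpha$ by applying the coupled-rearrangement contradiction not to a priori minimizers but to the weak limits themselves (which are minimizers for the residual mass pairs, by Brezis--Lieb and Lemma~\ref{lem:4}); then it repeats this with possibly different translates $z_n$ for $v_n$; finally it uses (G5) directly to force $\limsup_n|y_n-z_n|<\infty$. Thus (G5) enters not only to rule out the abstract split $(\alpha,0)+(0,\beta)$, but precisely to prevent the two species from concentrating around drifting centers---which is exactly the ``main obstacle'' you identify, and for which the paper's two-centers argument gives a concrete resolution.
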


To prove Theorem \ref{thm:3}, we prepare the following lemma. We state the proof of the lemma in Appendix.
\begin{lemma}
\label{lem:4}
The energy $E_{\alpha, \beta}$ satisfies that 
\begin{enumerate}
 \item $E_{\alpha+\alpha', \beta + \beta'} \leq E_{\alpha, \beta} +
       E_{\alpha', \beta'}$ for $\alpha, \beta \geq 0$.
 \item $E_{\alpha, \beta}<0$ for $\alpha,\beta \geq 0$, $(\alpha, \beta)\not=(0,0)$.
 \item $(\alpha, \beta) \mapsto E_{\alpha, \beta}$ is continuous on
       $[0,\infty) \times [0,\infty) \setminus \{(0,0)\}$.
\end{enumerate}
\end{lemma}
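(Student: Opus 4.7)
My plan is to prove the three parts essentially in order, with (ii) deduced as a corollary of (i) together with (E2), and (iii) by separate upper/lower semicontinuity arguments.

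For (i), the natural approach is the classical "translate far apart" trick adapted to the two-component setting. Given $\varepsilon>0$ I pick near-minimizers $(u,v) \in M_{\alpha,\beta}$ and $(u',v') \in M_{\alpha',\beta'}$ with $J[u,v] < E_{\alpha,\beta}+\varepsilon$ and $J[u',v'] < E_{\alpha',\beta'}+\varepsilon$. I approximate each in $H^1$ by compactly supported smooth functions and then rescale the approximants by a factor tending to $1$ so that the constraints $\|\cdot\|_{L^2}^2=\alpha,\beta,\alpha',\beta'$ are preserved exactly; continuity of $J$ under $H^1$-convergence, which follows from (G1)--(G3) through Sobolev embedding and dominated convergence, ensures that the perturbed energies still lie within $\varepsilon$ of the original ones. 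Fix $R$ so large that the compactly supported versions $(u_R,v_R)$ and $(u'_R,v'_R)$ sit in $B(0,R)$, and translate the primed pair by $y$ with $|y|>2R$. Because $G(0,0)=0$ and because the supports are now disjoint, $\|\cdot\|_{L^2}^2$, $\int|\nabla\cdot|^2$, and $\int G(|\cdot|^2,|\cdot|^2)$ all split additively, so
\begin{equation*}
E_{\alpha+\alpha',\beta+\beta'} \leq J[u_R,v_R] + J[u'_R(\cdot-y),v'_R(\cdot-y)] < E_{\alpha,\beta}+E_{\alpha',\beta'}+2\varepsilon.
\end{equation*}

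For (ii), the boundary cases $\beta=0$ (with $\alpha>0$) and $\alpha=0$ (with $\beta>0$) are immediate from (E2). For $\alpha,\beta>0$, I apply (i) with the decomposition $(\alpha,\beta)=(\alpha,0)+(0,\beta)$ to obtain $E_{\alpha,\beta}\leq E_{\alpha,0}+E_{0,\beta} < 0$.

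For (iii), consider $(\alpha_n,\beta_n)\to(\alpha,\beta)\neq(0,0)$. \textbf{Upper semicontinuity:} pick $(u,v)\in M_{\alpha,\beta}$ with $J[u,v]<E_{\alpha,\beta}+\varepsilon$. If $\alpha,\beta>0$, scaling by $u_n=\sqrt{\alpha_n/\alpha}\,u$, $v_n=\sqrt{\beta_n/\beta}\,v$ yields an admissible pair in $M_{\alpha_n,\beta_n}$ converging to $(u,v)$ in $H^1$, so $J[u_n,v_n]\to J[u,v]$ by continuity of $J$. If $\alpha=0$ (hence $\beta>0$), I instead take $u_n=\sqrt{\alpha_n}\,\phi$ for a fixed $\phi\in H^1(\RN)$ with $\|\phi\|_{L^2}=1$; then $u_n\to 0$ in $H^1$ while $v_n=\sqrt{\beta_n/\beta}\,v\to v$, and again $J[u_n,v_n]\to J[0,v]$. \textbf{Lower semicontinuity:} pick $(u_n,v_n)\in M_{\alpha_n,\beta_n}$ with $J[u_n,v_n]<E_{\alpha_n,\beta_n}+\varepsilon$. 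Growth condition (G3) combined with Gagliardo--Nirenberg yields uniform $H^1$-boundedness of $(u_n,v_n)$. Rescaling back to $M_{\alpha,\beta}$ (again using the sided boundary trick of adding a vanishing $\sqrt{\alpha_n}\,\phi$-type correction when $\alpha=0$) produces $(\tilde u_n,\tilde v_n)\in M_{\alpha,\beta}$ with $J[\tilde u_n,\tilde v_n]=J[u_n,v_n]+o(1)$, since the scaling factors tend to $1$ and the nonlinearity is Lipschitz on $H^1$-bounded sets by (G2)--(G3).

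The main obstacle I anticipate is controlling the nonlinear term under the rescaling in the lower-semicontinuity step when only $H^1$-boundedness (not convergence) of $(u_n,v_n)$ is available. The estimate
\begin{equation*}
\left| \int_{\RN} \bigl[ G(|\tilde u_n|^2,|\tilde v_n|^2) - G(|u_n|^2,|v_n|^2) \bigr] dx \right| = o(1)
\end{equation*}
requires bounding the integrand pointwise by an integrable majorant via (G2)--(G3) and the uniform $L^{2+4/N}$ bound supplied by Gagliardo--Nirenberg, and then using the fact that $\sqrt{\alpha/\alpha_n},\sqrt{\beta/\beta_n}\to 1$ to conclude by dominated convergence-style reasoning.
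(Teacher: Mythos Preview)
Your proof is correct, and parts (i) and (ii) coincide with the paper's argument. Part (iii) differs in method. The paper uses \emph{spatial} dilation $u_t(x)=u(x/t)$, $v_t(x)=v(x/t)$ with a single factor $t=\bigl(\min\{\alpha/(\alpha+h),\beta/(\beta+k)\}\bigr)^{1/N}$; this lands the pair only in a sub-constraint $\|u_t\|_2^2\le\alpha$, $\|v_t\|_2^2\le\beta$, and the gap is closed by the monotonicity $E_{\alpha',\beta'}\ge E_{\alpha,\beta}$ for $\alpha'\le\alpha$, $\beta'\le\beta$ (a consequence of (i) and (ii)). The payoff is that $J[u_t,v_t]$ transforms explicitly as $\tfrac{t^{N-2}}{2}\int|\nabla u|^2+|\nabla v|^2 - t^N\int G$, so no Lipschitz estimate on the nonlinear term is needed---only the $H^1$ bound from Lemma~\ref{lem:9}. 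Your \emph{amplitude} scaling $\sqrt{\alpha/\alpha_n}\,u_n$, $\sqrt{\beta/\beta_n}\,v_n$ hits $M_{\alpha,\beta}$ exactly and avoids monotonicity, at the price of verifying that $u\mapsto\int G(|u|^2,|v|^2)$ is Lipschitz on $H^1$-bounded sets via (G2)--(G3) and H\"older in $L^{2+4/N}$; this is indeed true and your sketch covers it.

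One point to sharpen: in the boundary case $\alpha=0$ of your lower-semicontinuity step, the assertion $J[\tilde u_n,\tilde v_n]=J[u_n,v_n]+o(1)$ with $\tilde u_n=0$ is too strong, because $\tfrac12\|\nabla u_n\|_2^2$ need not vanish even though $\|u_n\|_2^2=\alpha_n\to 0$. What does hold is the one-sided bound $J[0,\tilde v_n]\le J[u_n,v_n]+o(1)$, obtained by discarding the nonnegative $\tfrac12\|\nabla u_n\|_2^2$ and using Gagliardo--Nirenberg ($\|u_n\|_{L^{2+4/N}}^{2+4/N}\le C\alpha_n^{2/N}\|\nabla u_n\|_2^2\to 0$) to kill the cross-term in $G$. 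That inequality is exactly the direction you need, so the argument survives; the paper likewise treats the boundary case separately, proving $\lim_{k\to 0}E_{\alpha,k}=E_{\alpha,0}$ via the analogous estimate on $G(|u|^2,|v|^2)-G(|u|^2,0)$.
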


\begin{proof}[Proof of Theorem \ref{thm:3}]
In the case $\alpha=0$ or $\beta=0$, the results are included in Proposition \ref{prop:2}.
So we consider the case $\alpha, \beta>0$.
Let $\{(u_n, v_n)\}_{n \in \N}$ be a minimizing sequence in $M_{\alpha,\beta}$.
By using the Gagliardo-Nirenberg inequality,
we have that $\{(u_n, v_n)\}_{n \in \N}$ is bounded in $H^1(\RN)$. (For details, see Lemma \ref{lem:9})
\begin{claim*}
Both $\{u_n\}_{n \in \N}$ and $\{v_n\}_{n \in \N}$ does not vanish, that is, 
\begin{equation*}
\liminf_{n \to \infty}
\left(
\sup_{y \in \RN} \int_{B(y,1)} |u_n|^2 dx
+
\sup_{y \in \RN} \int_{B(y,1)} |v_n|^2 dx
\right)
>0.
\end{equation*}
\end{claim*}
Suppose that both $\{u_n\}_{n \in \N}$ and $\{v_n\}_{n \in \N}$ vanish. 
Then we can apply the P.-L. Lions lemma \cite[Lemma I.1]{MR87e:49035b} to obtain that
$\lim_{n \to \infty} u_n = \lim_{n \to \infty} v_n = 0$ in $L^l(\RN)$, where $l=2+4/N$.
On the other hand, 
by (G1)--(G3), for any $\epsilon>0$, there exists a positive constant
 $C(G,\epsilon)$ such that
\begin{equation*}
|G(s)| \leq \epsilon (|s_1|+|s_2|) 
+ C(G,\epsilon) (|s_1|^{2/N+1} + |s_2|^{2/N +1}).
\end{equation*}
Therefore we have
\begin{equation*}
 J[u_n, v_n] \geq 
- \epsilon \int_{\RN} |u_n|^2 + |v_n|^2 dx
- C(G, \epsilon) \int_{\RN} |u_n|^l + |v_n|^l dx.
\end{equation*}
Since $\{(u_n, v_n)\}_{n \in \N}$ is the minimizing sequence over
 $M_{\alpha, \beta}$,
taking $n \to \infty$, 
we have
$E_{\alpha, \beta} \geq - \epsilon(\alpha +\beta)$.
Since $\epsilon>0$ is arbitrary, 
$E_{\alpha, \beta} \geq 0$.
It contradicts to  Lemma \ref{lem:4} (ii).

In the above claim, 
we can assume $\{u_n\}_{n \in \N}$ does not vanish without loss of generality.

\begin{claim*}
 $\{v_n\}_{n \in \N}$ does not vanish.
\end{claim*}
Suppose that $\{v_n\}_{n \in \N}$ vanish.
%\footnote{以下の議論はBrezis-Liebを使って一発では？検討の結果、$\int |G(u_n, v_n)-G(u_n,0)|$の評価なので少し違っていてダメ。}.
Since $\{v_n\}_{n \in \N}$ is bounded in $H^1(\RN)$, 
we can apply the P.-L. Lions lemma to obtain
$\lim_{n \to \infty} v_n=0$ in $L^l(\RN)$.
By using
\begin{equation*}
 G(s_1, s_2) - G(s_1, 0)
=
\int_0^1 \frac{d}{d \theta} G(s_1, \theta s_2) d\theta
=
\int_0^1 g_2(s_1, \theta s_2) s_2 d \theta
\end{equation*}
and (G1)--(G3), we have
\begin{equation*}
|G(s_1, s_2) - G(s_1,0)|
\leq
\left(\epsilon+C(G,\epsilon)(|s_1|^{2/N}+|s_2|^{2/N}) \right)|s_2|.
\end{equation*}
Thus, we can estimate as 
\begin{align*}
& \left|
\int_{\RN} G(|u_n|^2, |v_n|^2) dx 
- 
\int_{\RN} G(|u_n|^2,0) dx 
\right| \\
&\leq
\epsilon \beta+C(G,\epsilon)
\int_{\RN} (|u_n|^{4/N}+|v_n|^{4/N}) |v_n|^2 dx \\
&\leq
\epsilon \beta+C(G,\epsilon)
\left(
\|u_n\|_{L^l(\RN)}^{2l/(N+2)}
\|v_n\|_{L^l(\RN)}^{Nl/(N+2)}
+
\|v_n\|_{L^l(\RN)}^l
\right).
\end{align*}
Since
$\lim_{n \to 0} v_n=0$ in $L^l(\RN)$ and  $\epsilon>0$ is arbitrarily, 
\begin{equation*}
\int_{\RN} G(|u_n|^2, |v_n|^2) dx 
- 
\int_{\RN} G(|u_n|^2,0) dx 
= o(1)
\text{ as } n \to \infty.
\end{equation*}
Thus we obtain
\begin{equation*}
 J[u_n, v_n] \geq J[u_n, 0] +o(1) \geq E_{\alpha, 0}+o(1) \text{ as } n \to \infty.
\end{equation*}
It contradicts to the assumption (E2). Hence $\{v_n\}_{n \in \N}$ does not vanish.

Since $\{u_n\}_{n \in \N}$ and 
$\{v_n\}_{n \in \N}$ are $H^1$-bounded sequences, 
taking a subsequence, there exist
$\{y_n\}_{n \in \N} \subset \RN$,
$u \in H^1(\RN) \setminus \{0\}$, and
$v \in H^1(\RN)$ such that 
\begin{equation}
\label{eq:5}
\begin{cases}
u_n(\cdot - y_n) \rightharpoonup u, \quad
v_n(\cdot - y_n) \rightharpoonup v 
& \text{ weakly in } H^1(\RN), \\
u_n(\cdot - y_n) \to u, \quad
v_n(\cdot - y_n) \to v 
& \text{ in } L^p_{\loc}(\RN) \text{ for } p \in [1,2^*), \\
u_n(\cdot - y_n) \to u, \quad
v_n(\cdot - y_n) \to v \quad 
& \text{ a.e. in } \RN \text{ as } n \to \infty.
\end{cases}
\end{equation}

Put $\phi_n= u_n(\cdot - y_n) - u$,
$\psi_n= v_n(\cdot - y_n) - v$,
$\alpha'=\|u\|_{L^2(\RN)}^2$ and
$\beta'=\|v\|_{L^2(\RN)}^2$. 
Then 
$0 < \alpha' \leq \alpha$ and
$0 \leq \beta' \leq \beta$ hold. 
\begin{claim*}
 $\alpha'=\alpha$.
\end{claim*}
Suppose that the claim does not hold, then $\alpha'< \alpha$.
By (G1)--(G3), we can apply the Brezis-Lieb lemma \cite{MR699419} to 
obtain
\begin{equation*}
 J[u_n, v_n]=J[u, v] + J[\phi_n, \psi_n] + o(1)
\geq E_{\alpha', \beta'} + E_{\|\phi_n\|_{L^2(\RN)}^2, \|\psi_n\|_{L^2(\RN)}^2} + o(1).
\end{equation*}

Since 
$\lim_{n \to \infty} \|\phi_n\|_{L^2(\RN)}^2 = \alpha - \alpha'$ and
$\lim_{n \to \infty} \|\psi_n\|_{L^2(\RN)}^2 = \beta - \beta'$,
by Lemma \ref{lem:4} (iii),  
taking $n \to \infty$, we have
\begin{equation}
\label{eq:7}
E_{\alpha, \beta} \geq J[u,v] + E_{\alpha-\alpha', \beta-\beta'}.
\end{equation}
On the other hand, by Lemma \ref{lem:4} (i),
\begin{equation}
\label{eq:8}
J[u,v] + E_{\alpha-\alpha', \beta-\beta'} 
\geq
E_{\alpha', \beta'} +
E_{\alpha-\alpha', \beta-\beta'}
\geq E_{\alpha, \beta}.
\end{equation}
By \eqref{eq:7} and \eqref{eq:8}, we obtain that 
$(u,v)$ is a global minimizer with respect to $E_{\alpha', \beta'}$.

To obtain a contradiction, we consider two cases 
$\beta-\beta'>0$ and $\beta-\beta'=0$.
In the case $\beta-\beta'>0$, noting $\alpha-\alpha'>0$,
let $\{(\xi_n, \zeta_n)\}_{n \in \N} \subset M_{\alpha - \alpha', \beta-\beta'}$ be a minimizing sequence with respect to $E_{\alpha-\alpha', \beta-\beta'}$. Then, as discussed before, 
Neither $\{\xi_n\}_{n \in \N}$ nor $\{\zeta_n\}_{n \in \N}$ vanish. 
Therefore, taking a subsequence, there exist
$\{z_n\}_{n \in \N} \subset \RN$, 
$\xi \in H^1(\RN) \setminus \{0\}$, and
$\zeta \in H^1(\RN)$ such that
\begin{align*}
\xi_n(\cdot - z_n) & \rightharpoonup \xi, \quad
\zeta_n(\cdot - z_n) \rightharpoonup \zeta \quad 
\text{ weakly in } H^1(\RN), \\
\xi_n(\cdot - z_n) & \to \xi, \quad
\zeta_n(\cdot - z_n) \to \zeta \quad 
\text{ in } L^p_{\loc}(\RN), \\
\xi_n(\cdot - z_n) & \to \xi, \quad
\zeta_n(\cdot - z_n)  \to \zeta \quad 
\text{ a.e. in } \RN \text{ as } n \to \infty.
\end{align*}
Putting 
$\alpha''=\|\xi\|_{L^2(\RN)}^2$ and
$\beta''=\|\zeta\|_{L^2(\RN)}^2$, we have
\begin{equation*}
E_{\alpha-\alpha', \beta-\beta'} \geq E_{\alpha'', \beta''} +
 E_{\alpha-\alpha'-\alpha'', \beta-\beta'-\beta''},
\end{equation*}
and $(\xi, \zeta)$ is a global minimizer with respect to $E_{\alpha'', \beta''}$.
Hence $(\xi, \zeta)$ is a solution of
\begin{equation*}
 \Delta \xi + g_1(\xi, \zeta) = \mu \xi, \quad
 \Delta \zeta + g_2(\xi, \zeta) = \nu \zeta \text{ in } \RN,
\end{equation*}
where $\mu$ and $\nu$ is Lagrange multipliers.
By using the elliptic regularity theory, $\xi$ and $\zeta$ is of class $C^1$ and satisfy the condition (A).
Now we can apply Theorem \ref{thm:1} and Lemma \ref{lem:10} to get
\begin{equation*}
E_{\alpha'+ \alpha'', \beta'+\beta''} \leq 
 J[(u \star \xi, v \star \zeta)] < J[u,v] + J[\xi, \zeta]
=E_{\alpha', \beta'} + E_{\alpha'', \beta''}.
\end{equation*}
It contradicts to \eqref{eq:7} and \eqref{eq:8}.
In the case $\beta-\beta'=0$, we can obtain contradiction by the same argument.

Thus, we have that $\|u\|_{L^2(\RN)}^2=\alpha$ holds in \eqref{eq:5}.
On the other hand, repeating the same argument for $\{v_n\}_{n \in \N}$ instead of $\{u_n\}_{n \in \N}$,
taking a subsequence, there exist
$\{z_n\}_{n \in \RN}$, 
$\tilde{u} \in H^1(\RN)$, and
$\tilde{v} \in H^1(\RN)$ such that
\begin{equation*}
\begin{cases}
u_n(\cdot - z_n) \rightharpoonup \tilde{u}, \quad
v_n(\cdot - z_n) \rightharpoonup \tilde{v} 
& \text{ weakly in } H^1(\RN), \\
u_n(\cdot - z_n) \to \tilde{u}, \quad
v_n(\cdot - z_n) \to \tilde{v} 
& \text{ in } L^p_{\loc}(\RN), \\
u_n(\cdot - z_n) \to \tilde{u}, \quad
v_n(\cdot - z_n) \to \tilde{v} \quad 
& \text{ a.e. in } \RN \text{ as } n \to \infty.
\end{cases}
\end{equation*}
Moreover we have $\|\tilde{v}\|_{L^2(\RN)}^2 = \beta$.
\begin{claim*}
$\limsup_{n \to \infty}|y_n-z_n| < \infty$
\end{claim*}
If not, taking a subsequence, we can assume $\limsup_{n \to \infty} |y_n-z_n|= \infty$.
Since 
$\|u\|_{L^2(\RN)}^2=\alpha$ and
$\|\tilde{v}\|_{L^2(\RN)}^2=\beta$. we have
$\tilde{u}=v=0$ a.e. in $\RN$.
By the Brezis-Lieb lemma, %\footnote{詳しく}、
\begin{equation*}
 J[u_n, v_n] = J[u,0] + J[0, \tilde{v}] 
+ J[u_n-u(\cdot+y_n), v_n-\tilde{v}(\cdot + z_n)].
\end{equation*}
On the other hand, since 
$\lim_{n \to \infty} \|u_n-u(\cdot +y_n)\|_{L^2(\RN)}=
\lim_{n \to \infty} \|v_n-\tilde{v}(\cdot +y_n)\|_{L^2(\RN)}=0
$, the P.-L. Lions lemma asserts that
\begin{equation*}
 \liminf_{n \to \infty} 
J[u_n-u(\cdot+y_n), v_n-\tilde{v}(\cdot + z_n)] \geq 0.
\end{equation*}
As $n \to \infty$, we get
\begin{equation}
\label{eq:6}
 E_{\alpha, \beta} \geq J[u,0] + J[0,\tilde{v}] \geq E_{\alpha,0} + E_{0,\beta}
\geq
E_{\alpha, \beta}.
\end{equation}
It means that $(u,0)$ and $(0,\tilde{v})$ are global minimizers with respect to $E_{\alpha,0}$ and $E_{0,\beta}$.
By using (G5), %\footnote{詳しく}, 
we have 
\begin{equation*}
E_{\alpha,\beta} \leq J[u,\tilde{v}] 
<J[u,0]+J[0,\tilde{v}].
\end{equation*}
It contradicts to \eqref{eq:6}. Hence the claim holds.

Thus, taking a subsequence, there exists $z \in \RN$ such that 
$z_n = y_n + z + o(1)$ in $\RN$ as $n \to \infty$.
Put $v=\tilde{v}(\cdot + z)$ then 
\eqref{eq:5} holds for $(u,v) \in M_{\alpha,\beta}$.
For $\phi_n= u_n(\cdot-y_n)-u$ and
$\psi_n= u_n(\cdot-y_n)-v$, 
$\phi_n, \psi_n \to 0$ in $L^2(\RN)$.
By using the P.-L. Lions lemma, 
$\phi_n, \psi_n \to 0$ in $L^l(\RN)$. 
Hence $\int_{\RN} G(|\phi_n|^2, |\psi_n|^2) dx \to 0$.
By the Brezis-Lieb lemma, 
\begin{align*}
 J[u_n, v_n] 
&= J[u,v] + J[\phi_n, \psi_n] + o(1) \\
&= E_{\alpha, \beta} 
+ \frac{1}{2} \int_{\RN} |\nabla \phi_n|^2 + |\nabla \phi_n|^2 dx
+o(1) \text{ as } n \to \infty.
\end{align*}
Taking $n \to \infty$, we obtain
\begin{equation*}
\lim_{n \to \infty} \int_{\RN} |\nabla \phi_n|^2 + |\nabla \psi_n|^2 dx =0.
\end{equation*}
Thus we get 
$\lim_{n \to \infty}\phi_n=
\lim_{n \to \infty} \psi_n = 0$ in $H^1(\RN)$. It means the conclusion.
\end{proof}

\appendix
\section{Appendix}
\label{sec:a}

In this section,  we give the proofs of lemmas used in the above section.
\begin{lemma}
\label{lem:9}
Assume (G1)--(G4).
For $R>0$, there exists a constant $C(N,G,R) >0$ such that 
\begin{equation}
\label{eq:20}
\frac{1}{4} \left(\|\nabla u\|_{L^2(\RN)}^2 + \|\nabla v\|_{L^2(\RN)}^2 \right)
\leq 
J[u, v] + C(N, G, R)
\end{equation}
for $(u, v) \in M_{\alpha, \beta}$ with  $\alpha, \beta \in [0,R]$.
Moreover, for $\alpha, \beta \geq 0$, 
any minimizing sequence $\{(u_n, v_n)\}_{n \in \N} \subset M_{\alpha,\beta}$ is $H^1$-bounded.
\end{lemma}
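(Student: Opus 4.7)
The plan is to obtain a pointwise bound on $G$ from the growth conditions (G1)--(G3), substitute it into $J[u,v]$, and use the Gagliardo--Nirenberg inequality at the critical exponent $l=2+4/N$ to absorb the gradient term. First, from $g_j(0)=0$ (continuity plus (G2)) and the sublinear growth (G3), for every $\varepsilon>0$ there is a constant $C_\varepsilon>0$ such that
\begin{equation*}
|g_j(s_1,s_2)| \leq \varepsilon\bigl(|s_1|^{2/N}+|s_2|^{2/N}\bigr)+C_\varepsilon,\qquad j=1,2.
\end{equation*}
Integrating along the line $\theta\mapsto(\theta s_1,\theta s_2)$, using $G(0,0)=0$, gives
\begin{equation*}
|G(s_1,s_2)|\leq C\bigl[\varepsilon\bigl(|s_1|^{1+2/N}+|s_2|^{1+2/N}\bigr)+C_\varepsilon(|s_1|+|s_2|)\bigr].
\end{equation*}
Substituting $(s_1,s_2)=(|u|^2,|v|^2)$ and integrating over $\RN$ yields
\begin{equation*}
\int_{\RN}\bigl|G(|u|^2,|v|^2)\bigr|\,dx \leq C\varepsilon\bigl(\|u\|_{L^l}^l+\|v\|_{L^l}^l\bigr)+C_\varepsilon(\alpha+\beta),
\end{equation*}
with $l=2+4/N$.

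Next I would apply the Gagliardo--Nirenberg inequality at the $L^2$-critical exponent,
\begin{equation*}
\|u\|_{L^l}^l \leq C_{GN}\,\|\nabla u\|_{L^2}^{2}\,\|u\|_{L^2}^{4/N},
\end{equation*}
so that for $(u,v)\in M_{\alpha,\beta}$ with $\alpha,\beta\in[0,R]$,
\begin{equation*}
\|u\|_{L^l}^l+\|v\|_{L^l}^l \leq C_{GN}R^{2/N}\bigl(\|\nabla u\|_{L^2}^2+\|\nabla v\|_{L^2}^2\bigr).
\end{equation*}
Combining these, choose $\varepsilon>0$ (depending only on $N,G,R$) so that $CC_{GN}\varepsilon R^{2/N}\leq 1/4$; then
\begin{equation*}
\int_{\RN} G(|u|^2,|v|^2)\,dx \leq \frac{1}{4}\bigl(\|\nabla u\|_{L^2}^2+\|\nabla v\|_{L^2}^2\bigr)+2RC_\varepsilon.
\end{equation*}
Plugging this into the definition of $J[u,v]$ and rearranging gives \eqref{eq:20} with $C(N,G,R)=2RC_\varepsilon$.

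For the second assertion, let $\{(u_n,v_n)\}\subset M_{\alpha,\beta}$ be a minimizing sequence. Since $J[u_n,v_n]\to E_{\alpha,\beta}$ is bounded, applying \eqref{eq:20} with $R=\max\{\alpha,\beta\}$ gives a uniform bound on $\|\nabla u_n\|_{L^2}^2+\|\nabla v_n\|_{L^2}^2$; combined with the fixed $L^2$-norms this yields $H^1$-boundedness. The only mildly delicate point is that the constant $C_\varepsilon$ blows up as $\varepsilon\to 0$, but this is harmless because the required $\varepsilon$ is fixed once $R$ is fixed, so the absorption goes through cleanly.
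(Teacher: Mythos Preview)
Your proof is correct and follows essentially the same route as the paper's: derive the pointwise bound $|G(s_1,s_2)|\le C_\varepsilon(|s_1|+|s_2|)+\varepsilon(|s_1|^{1+2/N}+|s_2|^{1+2/N})$ from (G1)--(G3), apply the Gagliardo--Nirenberg inequality at the $L^2$-critical exponent, and choose $\varepsilon$ small (depending on $R$) to absorb the gradient term. The only cosmetic difference is that you obtain the bound on $G$ by first bounding $g_j$ and integrating along a ray, whereas the paper simply states the bound on $G$ directly; the rest is identical.
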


\begin{proof} 
By (G1)--(G3),
for any $\epsilon>0$, there exists $C(G,\epsilon)>0$ such that
\begin{equation*}
 |G(s_1, s_2)| \leq C(G,\epsilon) (|s_1|+|s_2|) + 
\epsilon (|s_1|^{2/N+1} + |s_2|^{2/N+1}).
\end{equation*}
Therefore, by using the Gagliardo-Nirenberg inequality, 
for $(u,v) \in M_{\alpha,\beta}$, 
we have
\begin{align*}
 J[u, v] 
\geq & 
-C(G,\epsilon) (\alpha+ \beta)
+ \frac{1}{2} \left(\|\nabla u\|_{L^2(\RN)}^2 + \|\nabla v\|_{L^2(\RN)}^2 \right) \\
&- \epsilon \left(\|u\|_{L^l(\RN)}^l + \|v\|_{L^l(\RN)}^l \right) \\
\geq & 
-C(G,\epsilon) (\alpha+ \beta)
+ \frac{1}{2} \left(\|\nabla u\|_{L^2(\RN)}^2 + \|\nabla v\|_{L^2(\RN)}^2 \right) \\
& - \epsilon C(N) \left(\alpha^{4/N} \|\nabla u\|_{L^2(\RN)}^2 
+ \beta^{4/N}\|\nabla v\|_{L^2(\RN)}^2 \right) \\
 \geq&
-2R C(G,\epsilon) 
+ \left(\frac{1}{2} - \epsilon C(N) R^{4/N}\right) \left(\|\nabla u\|_{L^2(\RN)}^2 + \|\nabla v\|_{L^2(\RN)}^2 \right) 
\end{align*}
Choosing $\epsilon >0$ satisfying $\epsilon C(N) R^{4/N} < 1/4$,
we have \eqref{eq:20}.

Let $\{(u_n, v_n)\}_{n \in \N} \subset M_{\alpha, \beta}$ be a minimizing sequence.
Since $\{(u_n, v_n)\}_{n \in \N} \subset M_{\alpha,\beta}$,
$\{u_n\}_{n \in \N}$ and 
$\{v_n\}_{n \in \N}$ are bounded in $L^2(\RN)$.
\eqref{eq:20} asserts that $H^1$-boundedness.

\end{proof}

\begin{proof}[Proof of Lemma \ref{lem:4}]
(i): For $\epsilon >0$, there exists $(u,v) \in M_{\alpha,\beta} \cap C_0^\infty(\RN)$ and 
$(\phi,\psi) \in M_{\alpha',\beta'} \cap C_0^\infty(\RN)$. By using parallel transformation, we can assume 
that $(\supp u \cup \supp v) \cap (\supp \phi \cup \supp \psi) = \emptyset$.
Therefore $(u+\phi, v+\psi) \in M_{\alpha+\alpha', \beta+\beta'}$ and 
\begin{equation*}
E_{\alpha+\alpha', \beta+\beta'}
\leq 
I[u+\phi, v+\psi]
=
I[u, v]
+
I[\phi, \psi]
\leq
E_{\alpha,\beta} + E_{\alpha', \beta'} + 2 \epsilon.
\end{equation*}
Since $\epsilon>0$ is arbitrarily, it asserts (i).

(ii): (i) and (E2) asserts (ii) immediately.

(iii): First we show the following.
\begin{claim}
 For $\alpha ,\beta>0$, $\liminf_{(h,k) \to 0} E_{\alpha + h, \beta + k} \geq E_{\alpha, \beta}$.
\end{claim}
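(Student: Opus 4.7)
The plan is to prove the claim by a small renormalization: take an almost-minimizer of the nearby constrained problem, scale it back onto $M_{\alpha,\beta}$, and show the resulting energy shift vanishes in the limit.

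Fix a sequence $(h_n,k_n)\to(0,0)$ realizing $\liminf_{(h,k)\to 0}E_{\alpha+h,\beta+k}$. Since $\alpha,\beta>0$, we may assume $\alpha+h_n,\beta+k_n>0$ for all $n$. By Lemma \ref{lem:4}(i) and (E2),
\begin{equation*}
E_{\alpha+h_n,\beta+k_n}\leq E_{\alpha+h_n,0}+E_{0,\beta+k_n}<0,
\end{equation*}
so we may pick $(u_n,v_n)\in M_{\alpha+h_n,\beta+k_n}$ with $J[u_n,v_n]\leq E_{\alpha+h_n,\beta+k_n}+1/n\leq 1$. Applying Lemma \ref{lem:9} with $R=\alpha+\beta+1$ yields a uniform $H^1$-bound for $(u_n,v_n)$, and hence a uniform $L^l$-bound, where $l=2+4/N$.

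Define rescaling factors $\lambda_n=\sqrt{\alpha/(\alpha+h_n)}\to 1$ and $\mu_n=\sqrt{\beta/(\beta+k_n)}\to 1$, and set $\tilde u_n=\lambda_n u_n$, $\tilde v_n=\mu_n v_n$. Then $(\tilde u_n,\tilde v_n)\in M_{\alpha,\beta}$, so $E_{\alpha,\beta}\leq J[\tilde u_n,\tilde v_n]$. The kinetic difference is
\begin{equation*}
\tfrac12(\lambda_n^2-1)\|\nabla u_n\|_{L^2(\RN)}^2+\tfrac12(\mu_n^2-1)\|\nabla v_n\|_{L^2(\RN)}^2=o(1)
\end{equation*}
by the uniform gradient bound. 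For the potential part, the mean value theorem gives, pointwise,
\begin{equation*}
G(\lambda_n^2|u_n|^2,\mu_n^2|v_n|^2)-G(|u_n|^2,|v_n|^2)=\int_0^1\bigl[(\lambda_n^2-1)g_1(\xi_n,\eta_n)|u_n|^2+(\mu_n^2-1)g_2(\xi_n,\eta_n)|v_n|^2\bigr]\,dt,
\end{equation*}
with $\xi_n=(1+t(\lambda_n^2-1))|u_n|^2$ and $\eta_n=(1+t(\mu_n^2-1))|v_n|^2$. By (G1)--(G3), for every $\epsilon>0$ there exists $C(\epsilon)$ such that $|g_j(s_1,s_2)|\leq\epsilon+C(\epsilon)(|s_1|^{2/N}+|s_2|^{2/N})$, so integrating against $|u_n|^2$ and $|v_n|^2$ produces only quantities controlled by the uniform $L^2\cap L^l$ bounds. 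Hence the total potential difference over $\RN$ is $O(|\lambda_n^2-1|+|\mu_n^2-1|)=o(1)$.

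Combining these estimates,
\begin{equation*}
E_{\alpha,\beta}\leq J[\tilde u_n,\tilde v_n]=J[u_n,v_n]+o(1)\leq E_{\alpha+h_n,\beta+k_n}+\tfrac1n+o(1),
\end{equation*}
and passing to the limit $n\to\infty$ gives the claim. The only non-routine point is the uniform control of the potential difference, but this is handled by the standard $\epsilon$-$C(\epsilon)$ splitting from (G1)--(G3) together with the $H^1\cap L^l$ bound, so no serious obstacle arises.
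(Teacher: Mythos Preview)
Your argument is correct, but it proceeds differently from the paper's. The paper rescales \emph{spatially}: it sets $u_t(x)=u(x/t)$, $v_t(x)=v(x/t)$ with a single parameter $t=\bigl(\min\{\alpha/(\alpha+h),\beta/(\beta+k)\}\bigr)^{1/N}$. Under spatial dilation the potential term scales exactly by $t^N$, so no pointwise estimate on $G$ is needed; the price is that $(u_t,v_t)$ lands only in $M_{\,\leq\alpha,\,\leq\beta}$, and the paper then invokes the monotonicity of $E$ (from parts (i) and (ii)) to get $J[u_t,v_t]\geq E_{\alpha,\beta}$.

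Your amplitude rescaling $\tilde u_n=\lambda_n u_n$, $\tilde v_n=\mu_n v_n$ with two independent parameters lands exactly on $M_{\alpha,\beta}$, so no monotonicity is required; the price is that $G$ does not scale nicely, forcing the mean-value estimate via the growth bounds on $g_j$. Both routes are standard; yours is marginally more self-contained, while the paper's avoids the Hölder/$L^l$ bookkeeping by pushing the work onto the already-proved subadditivity and negativity of $E$.
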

Put $R=\max \{ \alpha +1, \beta +1\}$ and assume $|h|, |k| <\min\{\alpha ,\beta, 1\}$. We note that $0 < \alpha + h \leq R$ and $0 < \beta+k \leq R$.
For $\epsilon >0$, by the definition of $E_{\alpha+h, \beta+k}$,
there exists $(u,v) \in M_{\alpha+h, \beta+k}$ such that
\begin{equation*}
 E_{\alpha + h, \beta + k} \leq J[u,v] \leq  E_{\alpha + h, \beta + k} +\epsilon.
\end{equation*}
Putting
\begin{equation*}
 t=t(h,k) = \left(
\min \left\{
\frac{\alpha}{\alpha+h}, 
\frac{\beta}{\beta+k} 
\right\}
\right)^{1/N},
\end{equation*}
$u_t(x)=u(x/t)$, and $v_t(x)=v(x/t)$,
we have 
\begin{equation}
\label{eq:21}
 \lim_{(h,k) \to (0,0)} t =1,
\end{equation}
$ \|u_t\|_{L^2(\RN)}^2 = t^N (\alpha +h) \leq \alpha$, and 
$\|v_t\|_{L^2(\RN)}^2 = t^N (\beta +k)  \leq \beta$.
Therefore, by using (i) and (ii), we obtain
\begin{equation*}
 J[u_t, v_t] \geq E_{t^N(\alpha + h), t^N(\beta+k)} \geq
E_{\alpha, \beta}.
\end{equation*}
On the other hand, 
\begin{align*}
 J[u_t, v_t] &= \frac{t^{N-2}}{2} \int_{\RN} |\nabla u|^2 + |\nabla v|^2 dx
- t^N \int_{\RN} G(|u|^2, |v|^2) dx \\
& \leq 
 t^N J[u,v] + \frac{t^{N-2}\left|1-t^2\right|}{2} \int_{\RN} |\nabla u|^2 + |\nabla v|^2 dx.
\end{align*}
By Lemma \ref{lem:9},
\begin{align*}
 \int_{\RN} |\nabla u|^2 + |\nabla v|^2 dx & \leq J[u,v] + C(N,G,R) \\
& \leq \epsilon + C(N,G,R).
\end{align*}
Thus, noting \eqref{eq:21}, we get
\begin{equation*}
E_{\alpha, \beta} \leq \liminf_{(h,k) \to (0,0)} E_{\alpha+h,\beta+k} + \epsilon.
\end{equation*}
Since we can take $\epsilon >0$ arbitrarily, the claim holds.

\begin{claim}
 For $\alpha ,\beta>0$, $\limsup_{(h,k) \to 0} E_{\alpha + h, \beta + k} \leq E_{\alpha, \beta}$.
\end{claim}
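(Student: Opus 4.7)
The plan is to prove the upper semicontinuity by constructing, from a near-minimizer of $E_{\alpha,\beta}$, a test pair that lies in $M_{\alpha+h,\beta+k}$ and whose energy converges back to the near-minimum as $(h,k) \to 0$. Unlike Claim 1, where spatial rescaling with a single parameter $t$ was enough (thanks to the monotonicity estimate $E_{t^N(\alpha+h),t^N(\beta+k)} \geq E_{\alpha,\beta}$), here we need to match \emph{both} mass constraints exactly; a common spatial dilation cannot do this, so I will instead use independent multiplicative (amplitude) rescalings, which have the advantage of leaving the spatial variable untouched and hence making the $G$-integral easy to control.

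Concretely, fix $\epsilon>0$ and choose $(u,v) \in M_{\alpha,\beta}$ with $J[u,v] \leq E_{\alpha,\beta}+\epsilon$. For $|h|,|k|$ small enough that $\alpha+h>0$, $\beta+k>0$, set
\[
\lambda_h := \sqrt{(\alpha+h)/\alpha}, \qquad \mu_k := \sqrt{(\beta+k)/\beta},
\]
so that $(\lambda_h u, \mu_k v) \in M_{\alpha+h,\beta+k}$ and therefore $E_{\alpha+h,\beta+k} \leq J[\lambda_h u, \mu_k v]$. Since $\lambda_h,\mu_k \to 1$ as $(h,k) \to (0,0)$, the kinetic part
\[
\tfrac{1}{2}\bigl(\lambda_h^2 \|\nabla u\|_{L^2}^2 + \mu_k^2 \|\nabla v\|_{L^2}^2\bigr)
\]
trivially converges to $\tfrac{1}{2}(\|\nabla u\|_{L^2}^2 + \|\nabla v\|_{L^2}^2)$.

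The only real task is to show that $\int_{\RN} G(\lambda_h^2|u|^2, \mu_k^2|v|^2)\,dx \to \int_{\RN} G(|u|^2,|v|^2)\,dx$. For this I will use the standard growth estimate that follows from (G1)--(G3), namely that for any $\delta>0$ there exists $C(G,\delta)>0$ with
\[
|G(s_1,s_2)| \leq C(G,\delta)(|s_1|+|s_2|) + \delta\bigl(|s_1|^{2/N+1}+|s_2|^{2/N+1}\bigr),
\]
exactly as in the proof of Lemma~\ref{lem:9}. Applied with $s_j$ replaced by $\lambda_h^2|u|^2$, $\mu_k^2|v|^2$ and using $\lambda_h,\mu_k \leq 2$ for small $h,k$, this produces a dominating function of the form $C'(|u|^2+|v|^2) + C'(|u|^l+|v|^l)$ with $l=2+4/N$, which is integrable because $u,v \in H^1(\RN) \hookrightarrow L^2(\RN) \cap L^l(\RN)$. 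Combined with pointwise convergence $G(\lambda_h^2|u|^2,\mu_k^2|v|^2) \to G(|u|^2,|v|^2)$ a.e.\ (using continuity of $G$ from (G1)), Lebesgue's dominated convergence theorem yields the desired convergence.

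Consequently $\limsup_{(h,k)\to(0,0)} E_{\alpha+h,\beta+k} \leq J[u,v] \leq E_{\alpha,\beta}+\epsilon$, and since $\epsilon$ was arbitrary the claim follows. I do not expect any substantive obstacle: the potential sticking point would be matching both $L^2$ constraints simultaneously, but the amplitude-rescaling device bypasses the mixed-scale issue one would face with independent spatial dilations, and the growth hypotheses (G1)--(G3) directly furnish the dominating function.
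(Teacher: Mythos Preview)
Your proof is correct, but it takes a different route from the paper's. Contrary to your opening remark, the paper \emph{does} use a single spatial dilation here as well: with $t=\bigl(\min\{(\alpha+h)/\alpha,(\beta+k)/\beta\}\bigr)^{1/N}$ and $(u_t,v_t)(x)=(u,v)(x/t)$ one has $\|u_t\|_{L^2}^2=t^N\alpha\leq\alpha+h$ and $\|v_t\|_{L^2}^2=t^N\beta\leq\beta+k$, and then the monotonicity furnished by parts (i)--(ii) of Lemma~\ref{lem:4} gives $J[u_t,v_t]\geq E_{t^N\alpha,\,t^N\beta}\geq E_{\alpha+h,\beta+k}$. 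So exact mass matching is unnecessary precisely because monotonicity is available. Your amplitude rescaling $(\lambda_h u,\mu_k v)$ is nonetheless a perfectly valid alternative: it lands exactly on $M_{\alpha+h,\beta+k}$ and therefore bypasses any appeal to (i)--(ii), at the price of replacing the paper's explicit scaling identity for $J[u_t,v_t]$ by a dominated-convergence argument for the $G$-integral (which the growth bound from (G1)--(G3) indeed supports). Both arguments are short; yours is slightly more self-contained, while the paper's keeps the treatment of the nonlinear term trivial since $\int G(|u_t|^2,|v_t|^2)\,dx=t^N\int G(|u|^2,|v|^2)\,dx$.
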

We can show the claim as before. 
Actually,
for $\epsilon >0$, 
there exists $(u,v) \in M_{\alpha, \beta}$ such that
\begin{equation*}
 E_{\alpha, \beta } \leq J[u,v] \leq  E_{\alpha, \beta} +\epsilon.
\end{equation*}
Putting
\begin{equation*}
 t=t(h,k) = \left(
\min \left\{
\frac{\alpha+h}{\alpha}, 
\frac{\beta+k}{\beta} 
\right\}
\right)^{1/N},
\end{equation*}
$u_t(x)=u(x/t)$, and $v_t(x)=v(x/t)$,
we have $\lim_{(h,k) \to (0,0)} t =1$,
$ \|u_t\|_{L^2(\RN)}^2 = t^N \alpha \leq \alpha+h$, and 
$\|v_t\|_{L^2(\RN)}^2 = t^N \beta \leq \beta+k$.
Therefore, we obtain
\begin{equation*}
 J[u_t, v_t] \geq E_{t^N \alpha, t^N \beta} \geq
E_{\alpha+h, \beta+k}.
\end{equation*}
On the other hand, 
\begin{align*}
 J[u_t, v_t] &= \frac{t^{N-2}}{2} \int_{\RN} |\nabla u|^2 + |\nabla v|^2 dx
- t^N \int_{\RN} G(|u|^2, |v|^2) dx \\
& \leq 
 t^N J[u,v] + \frac{t^{N-2}\left|1-t^2\right|}{2} \int_{\RN} |\nabla u|^2 + |\nabla v|^2 dx.
\end{align*}
Since $u$ and $v$ are independent of $h$ and $k$, by \eqref{eq:21}, we get
\begin{equation*}
\limsup_{(h,k) \to (0,0)} E_{\alpha+h, \beta+k} \leq E_{\alpha, \beta} + \epsilon.
\end{equation*}
Since we can take $\epsilon >0$ arbitrarily, the claim holds.

Next, we consider the case $\alpha=0$ or $\beta=0$.
It is sufficient to consider the case $\beta=0$. 
By the same argument as above, we can show $\alpha \mapsto E_{\alpha, 0}$ is continuous. 
Therefore, we show the following claim.
\begin{claim}
$\lim_{k \to 0} E_{\alpha ,k}= E_{\alpha, 0}$ uniformly with respect to  $\alpha \in [0,R]$.
\end{claim}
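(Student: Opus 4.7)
The plan is to prove the two uniform one-sided inequalities $\limsup_{k \to 0} E_{\alpha, k} \leq E_{\alpha, 0}$ and $\liminf_{k \to 0} E_{\alpha, k} \geq E_{\alpha, 0}$ separately, each uniformly in $\alpha \in [0, R]$.

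For the upper bound, I would invoke subadditivity from Lemma \ref{lem:4}(i), which gives $E_{\alpha, k} \leq E_{\alpha, 0} + E_{0, k}$. This reduces the matter to showing $E_{0, k} \to 0$ as $k \to 0$, a statement that does not involve $\alpha$ and is therefore automatically uniform. To see that $E_{0, k} \to 0$, I would fix a single test function $\phi \in C_0^\infty(\RN)$ with $\|\phi\|_{L^2(\RN)} = 1$, use $v_k = \sqrt{k}\,\phi \in M_{0, k}$, and estimate
\[
E_{0, k} \leq J[0, v_k] = \tfrac{k}{2}\|\nabla \phi\|_{L^2(\RN)}^2 - \int_{\RN} G(0, k|\phi|^2)\,dx,
\]
where the nonlinear integral tends to zero by (G2)--(G3) (which yield $|G(0, s)| \leq \epsilon\, s + C(\epsilon)\,s^{1 + 2/N}$) combined with $\phi \in C_0^\infty$. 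Together with $E_{0, k} < 0$ from Lemma \ref{lem:4}(ii), this yields $E_{0, k} \to 0$, hence $\limsup_{k\to 0}(E_{\alpha, k} - E_{\alpha, 0}) \leq 0$ uniformly in $\alpha$.

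For the lower bound, I would choose near-minimizers: given $\epsilon > 0$, pick $(u, v) \in M_{\alpha, k}$ with $J[u, v] \leq E_{\alpha, k} + \epsilon$. Since the upper bound just established gives $E_{\alpha, k} \leq E_{\alpha, 0} \leq 0$, Lemma \ref{lem:9} yields a uniform $H^1$-bound $\|u\|_{H^1(\RN)}^2 + \|v\|_{H^1(\RN)}^2 \leq C(R, G)$ for all $\alpha, k \in [0, R]$. I would then split
\[
J[u, v] = J[u, 0] + \tfrac{1}{2}\|\nabla v\|_{L^2(\RN)}^2 - \int_{\RN} \bigl[G(|u|^2, |v|^2) - G(|u|^2, 0)\bigr]\,dx,
\]
and control the last integral exactly as in the vanishing-$v$ step of the proof of Theorem \ref{thm:3}: (G2)--(G3) yield $|G(|u|^2, |v|^2) - G(|u|^2, 0)| \leq \bigl(\epsilon + C(\epsilon)(|u|^{4/N} + |v|^{4/N})\bigr)|v|^2$, and H\"older plus Gagliardo--Nirenberg turn this into a bound involving $\|v\|_{L^l(\RN)}$, where $l = 2 + 4/N$. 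Since $\|v\|_{L^2(\RN)} = \sqrt{k}$ and $\|v\|_{H^1(\RN)}$ is bounded independently of $\alpha$, Gagliardo--Nirenberg forces $\|v\|_{L^l(\RN)} \to 0$ uniformly in $\alpha$, so the integral is $o_{k \to 0}(1)$ uniformly in $\alpha$. Because $(u, 0) \in M_{\alpha, 0}$ gives $J[u, 0] \geq E_{\alpha, 0}$ and $\|\nabla v\|_{L^2(\RN)}^2 \geq 0$, we obtain $E_{\alpha, k} + \epsilon \geq E_{\alpha, 0} + o(1)$; letting $k \to 0$ and then $\epsilon \to 0$ closes the uniform lower bound.

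The main obstacle is verifying that the $o(1)$ remainder in the lower-bound step really is uniform in $\alpha \in [0, R]$. This hinges entirely on Lemma \ref{lem:9}, whose $H^1$-bound depends on $R$ but not on the individual value of $\alpha$; combined with $\|v\|_{L^2(\RN)} = \sqrt{k}$, Gagliardo--Nirenberg then makes $\|v\|_{L^l(\RN)}$ vanish at a rate depending only on $R$ and $k$. Without this uniform $H^1$ control, the Gagliardo--Nirenberg exponent balance could degenerate for $\alpha$ near either extreme of $[0, R]$ and destroy the uniformity.
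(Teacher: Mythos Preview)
Your proof is correct and follows essentially the same route as the paper's: the lower bound via near-minimizers, the $G$-difference estimate, and the uniform $H^1$-control from Lemma~\ref{lem:9} match the paper's argument almost line for line. One minor simplification the paper makes on the upper-bound side: since Lemma~\ref{lem:4}(ii) already gives $E_{0,k}<0$, subadditivity yields $E_{\alpha,k}\le E_{\alpha,0}+E_{0,k}\le E_{\alpha,0}$ immediately, so there is no need to prove $E_{0,k}\to 0$.
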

For $\epsilon>0$, there exists $(u,v) \in M_{\alpha,k}$ such that 
\begin{equation*}
 J[u,v] \leq E_{\alpha ,k} + \epsilon.
\end{equation*}
On the other hand, we have
\begin{align*}
J[u,v] & \geq J[u,0] + \int_{\RN} G(|u^2|, 0) - G(|u|^2, |v|^2) dx \\
& \geq E_{\alpha,0} + \int_{\RN} G(|u^2|, 0) - G(|u|^2, |v|^2) dx.
\end{align*}
By using
\begin{equation*}
 G(s_1, s_2) - G(s_1, 0)
=
\int_0^1 \frac{d}{d \theta} G(s_1, \theta s_2) d\theta
=
\int_0^1 g_2(s_1, \theta s_2) s_2 d \theta
\end{equation*}
and (G1)--(G3), we have
\begin{equation*}
|G(s_1, s_2) - G(s_1,0)|
\leq
\left(C(G,\delta) + \delta(|s_1|^{2/N}+|s_2|^{2/N}) \right)|s_2|.
\end{equation*}
Thus, 
\begin{align*}
& \left|
\int_{\RN} G(|u|^2, |v|^2) dx 
- 
\int_{\RN} G(|u|^2,0) dx 
\right| \\
&\leq
k C(G, \delta) + \delta
\int_{\RN} (|u|^{4/N}+|v|^{4/N}) |v|^2 dx \\
&\leq
kC(G,\delta) + \delta
\left(
\|u\|_{L^l(\RN)}^{2l/(N+2)}
\|v\|_{L^l(\RN)}^{Nl/(N+2)}
+
\|v\|_{L^l(\RN)}^l
\right) \\
& \leq
kC(G,\delta) + \delta C(N)
\left(
\|\nabla u\|_{L^2(\RN)}^{2l/(N+2)}
\|\nabla v\|_{L^2(\RN)}^{Nl/(N+2)}
+
\|\nabla v\|_{L^2(\RN)}^l
\right) 
\end{align*}
By Lemma \ref{lem:9}, 
\begin{equation*}
\|\nabla u\|_{L^2(\RN)}^2 + 
\|\nabla v\|_{L^2(\RN)}^2
\leq 4 (E_{\alpha,k} +\epsilon) + C(N,G,R)
\leq C(N,G,R)
\end{equation*}
for $\epsilon \leq 1$, because of $E_{\alpha, k} \leq 0$.
Consequently we have
\begin{align*}
\limsup_{k \to 0} \left|
\int_{\RN} G(|u|^2, |v|^2) dx 
- 
\int_{\RN} G(|u|^2,0) dx 
\right|
& \leq
\limsup_{k \to 0} 
\left(
k C(G, \delta) + \delta C(N,G,R)
\right) \\
& \leq 
\delta C(N,G,R).
\end{align*}
Since $\delta >0$ is arbitrarily, 
\begin{equation*}
\lim_{k \to 0} \left|
\int_{\RN} G(|u|^2, |v|^2) dx 
- 
\int_{\RN} G(|u|^2,0) dx 
\right|=0 \text{ uniformly with respect to } \alpha.
\end{equation*}
Thus we have 
\begin{equation*}
 E_{\alpha, 0} \leq \liminf_{k \to 0} E_{\alpha,k}+\epsilon 
\text{ uniformly with respect to } \alpha.
\end{equation*}
Since $\epsilon >0$ is arbitrarily, 
\begin{equation*}
 E_{\alpha, 0} \leq \liminf_{k \to 0} E_{\alpha,k}
\text{ uniformly with respect to } \alpha.
\end{equation*}
On the other hand, by (i) and (ii), $E_{\alpha, k} \leq E_{\alpha,0}$ holds.
Thus we get the conclusion.
\end{proof}

%\begin{lemma}\footnote{Brezis-Lieb}
%\label{lem:8}
%Suppose (G1)--(G4).
%If $\{u_n\}_{n \in \N} \subset H^1(\RN)$ and $\{v_n\}_{n \in \N} \subset H^1(\RN)$ satisfy
%\begin{align*}
% & u_n \rightharpoonup u \text{ weakly in } H^1(\RN), \\
% & v_n \rightharpoonup v \text{ weakly in } H^1(\RN), \\
% & u_n \to u \text{ a.e. in } \RN, \\
% & v_n \to v \text{ a.e. in } \RN \text{ as } n \to \infty. 
%\end{align*}
%Then 
%\begin{equation*}
%\lim_{n \to \infty} \int_{\RN} |G(u_n, v_n) - G(u,v) - G(u_n-u, v_n-v)| dx.
%\end{equation*}
%\end{lemma}
%\begin{proof}
%\footnote{証明をつける}
%\end{proof}

\begin{lemma}
\label{lem:10}
Assume (G1)--(G4). For $u, v, \phi, \psi \in H^1(\RN)$ satisfying the condition (A),
\begin{equation*}
\int_{\RN} G((u \star \phi)^2, (v \star \psi)^2) dx 
\geq
\int_{\RN} G(|u|^2, |v|^2) dx 
+
\int_{\RN} G(|\phi|^2, |\psi|^2) dx 
\end{equation*}
\end{lemma}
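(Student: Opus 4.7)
The plan is to decompose the nonlinearity $G$ into a ``separable'' part, where the coupled rearrangement behaves as an equality, and a ``genuinely coupled'' (supermodular) remainder $K$, where (G4) furnishes the right sign to yield the inequality. I would set
\[
G_1(s_1) := G(s_1,0), \quad G_2(s_2) := G(0, s_2), \quad K(s_1, s_2) := G(s_1, s_2) - G_1(s_1) - G_2(s_2),
\]
so that $K(0, \cdot) = K(\cdot, 0) = 0$. A direct computation gives
\[
K(s_1', s_2') - K(s_1', s_2) - K(s_1, s_2') + K(s_1, s_2) = \int_{s_1}^{s_1'} \bigl[g_1(r, s_2') - g_1(r, s_2)\bigr]\, dr \geq 0
\]
for $s_i' \geq s_i$, by (G4); hence $K$ is $2$-increasing. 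Also by (G4), $G_1$ and $G_2$ are nondecreasing on $[0, \infty)$.

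For the separable part, I would observe that $(u \star \phi)^2$ coincides with the coupled rearrangement of $u^2$ and $\phi^2$, because both are symmetric nonincreasing in $x_1$ on each slice and the level sets $\{w^2 > t\}$ have length $|\{u^2 > t\}| + |\{\phi^2 > t\}|$. Hence Lemma \ref{lem:3}(ii), applied with the monotone $\Phi = G_1$, gives
\[
\int_{\RN} G_1((u \star \phi)^2)\, dx = \int_{\RN} G_1(u^2)\, dx + \int_{\RN} G_1(\phi^2)\, dx,
\]
and the analogous equality holds for $G_2$ applied to $(v, \psi)$.

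For the cross term, I would first reduce to the case $G \in C^2$ by mollifying $G$ on $[0, \infty)^2$; then $h := \partial_{s_1} \partial_{s_2} G \geq 0$ by (G4), and $K(s_1, s_2) = \int_0^{s_1}\int_0^{s_2} h(r_1, r_2)\, dr_1\, dr_2$. Substituting and applying Fubini,
\[
\int_{\RN} K(f^2, g^2)\, dx = \int_0^\infty\!\int_0^\infty h(r_1, r_2)\, \mathcal{L}^N\bigl( \{f^2 > r_1\} \cap \{g^2 > r_2\}\bigr)\, dr_1\, dr_2
\]
for any measurable $f, g$. For fixed $x' \in \R^{N-1}$ and $r_1, r_2 > 0$, by definition of the coupled rearrangement the sets $I_1(x') := \{x_1 : (u \star \phi)^2(x_1, x') > r_1\}$ and $I_2(x') := \{x_1 : (v \star \psi)^2(x_1, x') > r_2\}$ are open intervals symmetric about the origin, with lengths $|A_u(x')| + |A_\phi(x')|$ and $|B_v(x')| + |B_\psi(x')|$ respectively, where $A_u(x') := \{x_1 : u^2(x_1, x') > r_1\}$ and $A_\phi, B_v, B_\psi$ are defined analogously. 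Intersecting two centered intervals yields the smaller one, so
\[
|I_1(x') \cap I_2(x')| = \min\bigl(|A_u| + |A_\phi|,\; |B_v| + |B_\psi|\bigr) \geq \min(|A_u|, |B_v|) + \min(|A_\phi|, |B_\psi|),
\]
by the elementary inequality $\min(a+c, b+d) \geq \min(a,b) + \min(c,d)$, and the right-hand side dominates $|A_u \cap B_v| + |A_\phi \cap B_\psi|$. Integrating in $x'$, multiplying by $h(r_1, r_2) \geq 0$, and integrating in $(r_1, r_2)$ gives
\[
\int_{\RN} K((u \star \phi)^2, (v \star \psi)^2)\, dx \geq \int_{\RN} K(u^2, v^2)\, dx + \int_{\RN} K(\phi^2, \psi^2)\, dx,
\]
and passing back from the mollification completes this step. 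Adding the equalities from the separable part yields the lemma.

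The main obstacle is justifying the integral representation of $K$ under the merely $C^1$ regularity assumed in (G1); mollification works but requires a careful limiting argument, using the growth bound from (G3) together with the Gagliardo--Nirenberg estimate of Lemma \ref{lem:9} to dominate the integrands uniformly. Alternatively, one can avoid differentiating $G$ altogether by representing $K$ as the distribution function of a positive Borel measure $\mu$ on $[0, \infty)^2$, which exists because $K$ is continuous, $2$-increasing, and vanishes on the axes; the same Fubini argument then runs with $d\mu(r_1, r_2)$ in place of $h(r_1, r_2)\, dr_1\, dr_2$.
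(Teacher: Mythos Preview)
Your argument is correct, and the core inequality you use---that for centered intervals the intersection measure equals the minimum of the lengths, combined with $\min(a+c,b+d)\geq\min(a,b)+\min(c,d)$---is exactly the same one the paper relies on. The difference lies in how the integral representation is set up.

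You decompose $G=G_1+G_2+K$ symmetrically, isolate the supermodular part $K$, and represent it via $\partial_{s_1}\partial_{s_2}G\geq 0$ (or, alternatively, as the distribution function of a positive measure on $[0,\infty)^2$). This is conceptually clean and makes the role of (G4) transparent, but since (G1) only gives $G\in C^1$ you are forced into a mollification step or the measure-theoretic workaround, as you note.

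The paper instead telescopes asymmetrically, writing $G(s_1,s_2)=\int_0^{s_1}g_1(s,s_2)\,ds + G(0,s_2)$, and then applies the layer-cake formula directly to $g_1(s,v(x))\geq 0$. Monotonicity of $g_1$ in its second argument (again from (G4)) lets one write $\{x:g_1(s,v(x))>r\}=\{x:v(x)>t(r,s)\}$ for a suitable threshold, and from there the same level-set comparison goes through. The advantage is that this route needs only the first derivatives $g_j$, which exist under (G1), so no mollification or approximation is required. Your approach buys a more symmetric and structurally clearer argument; the paper's buys a slightly shorter path with no regularity detour.
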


\begin{proof}
For simplicity, we use $u,v, \phi, \psi$ instead of $|u|^2$, $|v|^2$, $|\phi|^2$, $|\psi|^2$.
Noting $(u \star \phi)^2 =|u|^2 \star |\phi|^2$ and 
$(v \star \psi)^2 =|v|^2 \star |\psi|^2$, we show 
\begin{equation}
\label{eq:22}
\int_{\RN} G(u \star \phi, v \star \psi) dx 
\geq
\int_{\RN} G(u, v) dx 
+
\int_{\RN} G(\phi, \psi) dx 
\end{equation}
for $u,v, \psi, \psi \geq 0$.

By (G2) and (G4), 
$g_j(s,t) \geq g_j(0,0) =0$.
By using mean value theorem, we have
\begin{align*}
& \int_\RN G(u,v) dx \\
= &
\int_\RN G(u(x), v(x)) - G(0,v(x)) + G(0,v(x)) - G(0,0) dx \\
=&
\int_\RN dx \int_0^{u(x)} g_2(s, v(x)) ds + 
\int_\RN dx \int_0^{v(x)} g_1(0, t) dt  \\
=&
\int_\RN dx \int_0^\infty g_2(s,v(x)) \chi_{\{x;u(x)>s\}}(x)ds
+
\int_\RN dx \int_0^\infty g_1(0,t) \chi_{\{x;v(x)>t\}}(x)dt \\
=&
\int_\RN dx \int_0^\infty \int_0^\infty 
\chi_{\{x; g_2(s,v(x))>r\}}(x) \chi_{\{x;u(x)>s\}}(x)dr ds \\
&+
\int_\RN dx \int_0^\infty g_1(0,t) \chi_{\{x;v(x)>t\}}(x)dt \\
=&
\int_0^\infty \int_0^\infty 
|\{x; g_2(s,v(x))>r\} \cap  \{x;u(x)>s\}| dr ds  +
\int_0^\infty g_1(0,t) |\{x;v(x)>t\}| dt.
\end{align*}
For each $r,s>0$, Put $t(r,s) =\sup \{t; g_2(s,t) \leq r\}$ if 
$\{t; g_2(s,t) \leq r\}\not=\emptyset$
, $t(r,s)=-\infty$ if $\{t; g_2(s,t) \leq r\}=\emptyset$. 
Then, by (G4), $g_2(s,v(x))>r$ if and only if $v(x)>t(r,s)$.
So we have
\begin{equation*}
|\{x; g_2(s,v(x))>r\} \cap  \{x;u(x)>s\}|
=|\{x; v(x)>t(r,s)\} \cap  \{x;u(x)>s\}|.
\end{equation*}
Hence
\begin{align}
\notag
 \int_\RN G(u,v) dx
=&
\int_0^\infty \int_0^\infty 
|\{x; v(x)>t(r,s)\} \cap  \{x;u(x)>s\}| dr ds \\
\label{eq:40}
& +
\int_0^\infty g_1(0,t) |\{x;v(x)>t\}| dt.
\end{align}
Similarly, we can obtain
\begin{align}
\nonumber
 \int_\RN G(\phi,\psi) dx
=& 
\int_0^\infty \int_0^\infty 
|\{x; \psi(x)>t(r,s)\} \cap  \{x;\phi(x)>s\}| dr ds \\
\label{eq:41}
& + 
\int_0^\infty g_1(0,t) |\{x;\psi(x)>t\}| dt, \\
\nonumber
 \int_\RN G(u \star \phi,v \star \psi) dx
=& 
\int_0^\infty \int_0^\infty 
|\{x; (v \star \psi) (x)>t(r,s)\} \cap  \{x;(u \star \phi)(x)>s\}| dr ds \\
\label{eq:42}
& +
\int_0^\infty g_1(0,t) |\{x;(v \star \psi)(x)>t\}| dt.
\end{align}
Here, by Lemma \ref{lem:3} (i), we have
\begin{align*}
 & 
|\{x; v(x)>t(r,s)\} \cap  \{x;u(x)>s\}|
+|\{x; \psi(x)>t(r,s)\} \cap  \{x;\phi(x)>s\}| \\
 \leq &
\min\{|\{x; v(x)>t(r,s)\}|, |\{x;u(x)>s\}|\}
+ \min\{|\{x; \psi(x)>t(r,s)\}|, |\{x; \phi(x)>s\}|\} \\
\leq &
\min\{|\{x; v(x)>t(r,s)\}|+|\{x; \psi(x)>t(r,s)\}|, |\{x;u(x)>s\}|+|\{x;
 \phi(x)>s\}|\} \\
= &
\min\{|\{x; (v \star \psi)(x)>t(r,s)\}|, |\{x;(u \star \phi)(x)>s\}|\}.
\end{align*}
Since $\{x; (v \star \psi)(x)>t(r,s)\}$, $\{x;(u \star \phi)(x)>s\}$ are
 balls centered at the origin, we have 
\begin{align*}
& \min\{|\{x; (v \star \psi)(x)>t(r,s)\}|, |\{x;(u \star \phi)(x)>s\}|\} \\
&= |\{x; (v \star \psi)(x)>t(r,s)\} \cap \{x;(u \star \phi)(x)>s\}|.
\end{align*}
Hence, 
\begin{align}
\notag
& |\{x; v(x)>t(r,s)\} \cap  \{x;u(x)>s\}|
+|\{x; \psi(x)>t(r,s)\} \cap  \{x;\phi(x)>s\}| \\
\label{eq:39}
& \leq 
|\{x; (v \star \psi)(x)>t(r,s)\} \cap \{x;(u \star \phi)(x)>s\}|. 
\end{align}
On the other hand, 
\begin{equation}
\label{eq:56}
|\{x;v(x)>t\}|
+ |\{x;\psi(x)>t\}|
=
|\{x;(v \star \psi)(x)>t\}|
\end{equation}
because of Lemma \ref{lem:3} (i).
Consequently, \eqref{eq:40}, \eqref{eq:41}, \eqref{eq:42},
 \eqref{eq:39} and \eqref{eq:56} assert that this lemma.
\end{proof}

\bibliographystyle{plain}

%\bibliography{mathscinet,preprint}

\begin{thebibliography}{1}

\bibitem{MR699419}
Ha{\"{\i}}m Br{\'e}zis and Elliott Lieb.
\newblock A relation between pointwise convergence of functions and convergence
  of functionals.
\newblock {\em Proc. Amer. Math. Soc.}, 88(3):486--490, 1983.

\bibitem{MR677997}
T.~Cazenave and P.-L. Lions.
\newblock Orbital stability of standing waves for some nonlinear
  {S}chr\"odinger equations.
\newblock {\em Comm. Math. Phys.}, 85(4):549--561, 1982.

\bibitem{MR0257304}
G.~F.~D. Duff.
\newblock Integral inequalities for equimeasurable rearrangements.
\newblock {\em Canad. J. Math.}, 22:408--430, 1970.

\bibitem{MR2548703}
Hichem Hajaiej.
\newblock Symmetric ground state solutions of {$m$}-coupled nonlinear
  {S}chr\"odinger equations.
\newblock {\em Nonlinear Anal.}, 71(10):4696--4704, 2009.

\bibitem{MR87e:49035b}
P.-L. Lions.
\newblock The concentration-compactness principle in the calculus of
  variations. {T}he locally compact case. {II}.
\newblock {\em Ann. Inst. H. Poincar\'e Anal. Non Lin\'eaire}, 1(4):223--283,
  1984.

\bibitem{shibata-manumath}
Masataka Shibata.
\newblock Stable standing waves of nonlinear schr\"{o}dinger equations with a
  general nonlinear term.
\newblock {\em to appear in manuscripta mathematica}.

\end{thebibliography}

\end{document}